\newtheorem{thm}{Theorem}[section]
\newtheorem{lem}[thm]{Lemma}
\theoremstyle{definition}
\newtheorem{defn}[thm]{Definition}
\newtheorem{example}[thm]{Example}
\theoremstyle{remark}
\newtheorem{rem}[thm]{Remark}
\numberwithin{equation}{section}
\begin{document}
\title[Abstract nonautonomous difference inclusions...]{Abstract nonautonomous difference inclusions in locally convex spaces}

\author{Marko Kosti\' c}
\address{Faculty of Technical Sciences,
University of Novi Sad,
Trg D. Obradovi\' ca 6, 21125 Novi Sad, Serbia}
\email{marco.s@verat.net}

{\renewcommand{\thefootnote}{} \footnote{2020 {\it Mathematics
Subject Classification.} 42A75, 43A60, 35B15.
\\ \text{  }  \ \    {\it Key words and phrases.} Abstract  nonautonomous difference inclusions, almost periodic sequences, Weyl almost periodic  sequences, Besicovitch almost periodic  sequences, locally convex spaces.}}

\begin{abstract}
In this paper, we consider abstract nonautonomous difference inclusions in locally convex spaces with integer order differences. We particularly analyze the existence and uniqueness of almost periodic type solutions to abstract nonautonomous difference inclusions. Our results seem to be completely new even in the Banach space setting.
\end{abstract}
\maketitle

\section{Introduction and preliminaries}

Suppose that $(Y, \| \cdot \|)$ is a complex Banach space.
Then we say that
a $Y$-valued sequence $(x(k))_{k\in {\mathbb Z}}$ 
is (Bohr)
almost periodic if, for every $\epsilon> 0,$ there exists $l>0$ such that for each $t \in {\mathbb Z}$ there exists ${\bf \tau} \in {\mathbb Z}\cap [t,t+l]$ such that  
\begin{align*}
\bigl\|x(k+{\bf \tau})-x(k)\bigr\| \leq \epsilon,\quad k\in {\mathbb Z}.
\end{align*}
Any almost periodic $Y$-valued sequence is bounded and its range is relatively compact in $Y.$ 
We know that a sequence $(x(k))_{k\in {\mathbb Z}}$  in $Y$ is almost periodic if and only if there exists an almost periodic function $F : {\mathbb R} \rightarrow Y$
such that $x(k)=F(k)$ for all $k\in {\mathbb Z};$ for more details about almost periodic functions, almost periodic sequences and their applications, we refer to research monographs \cite{diagana,gaston,nova-mono,nova-selected,188,30}.

The abstract  difference equations with integer order differences
have been considered in the research monograph \cite{gil} by M. I. Gil, where all considered operator coefficients are bounded linear operators between normed spaces. In our recent research monograph \cite{funkcionalne}, we have investigated the existence and uniqueness of almost periodic type solutions for various classes of abstract difference inclusions in Banach spaces with multivalued linear operators as operator coefficients. 

On the other hand, the almost periodic sequences with values in locally convex spaces and the abstract difference equations in locally convex spaces 
have not received so much attention of the authors by now; cf. \cite{gefter,gefter1,iwasaki,mosle,novac} for some results obtained in this direction. 
In \cite{nizovi}, we have recently analyzed various classes of generalized $\rho$-almost periodic type sequences with values in locally convex spaces and presented some applications to abstract Volterra difference equations.

The main aim of this paper is to continue the research study raised in \cite{nizovi} and the research studies \cite{zh1}-\cite{zh2} by S. Zhang. In \cite{zh1}, the author has investigated
the existence and uniqueness of almost periodic solutions to nonautonomous difference equation
\begin{align}\label{szhang}
x(k+1)=A(k)x(k)+b(k),\quad k\in {\mathbb Z},
\end{align}
where $s\in {\mathbb N},$ $(A(k))_{k\in {\mathbb Z}}$ is an almost periodic real matrix of format $s\times s$ and $(b(k))_{k\in {\mathbb Z}}$ is an almost periodic sequence in ${\mathbb R}^{s}.$ It has been proved that the system \eqref{szhang} has a unique almost periodic solution provided that
$$
\overline{A}=\limsup_{k\rightarrow +\infty}\frac{1}{k}\sum_{j=1}^{k}\| A(j)\|<1,
$$
where $\|A(j)\|$ denotes the sum of all absolute values of the elements of $A(j);$ moreover, if $(A(k))_{k\in {\mathbb Z}}$ and $(b(k))_{k\in {\mathbb Z}}$ are of period $\omega \in {\mathbb N},$ then the solution $(x(k))_{k\in {\mathbb Z}}$ is also of period $\omega$. 
In this paper, we first consider the following operator extension of nonautonomous difference equation \eqref{szhang}:
\begin{align}\label{zajeb-equ00}
x(k+1)=A(k)x(k)+f(k),\quad k\in {\mathbb Z},
\end{align}
where $A(k)$ is a linear continuous operator acting on a Hausdorff sequentially complete locally convex space $Y$ ($k\in {\mathbb Z}$) and $f: {\mathbb Z} \rightarrow Y;$ after that, we apply the established result to the abstract nonautonomous difference inclusions with multivalued linear operators and the abstract nonautonomous (degenerate) difference equations with linear operators. Concerning the abstract nonautonomous difference equations in Banach spaces and the 
geometric theory of discrete nonautonomous dynamical systems, we may also refer to \cite{aul,megan2,bar,megan1,migda,kristijan,russ} and references quoted therein.

The organization and main ideas of this research article can be briefly described as follows. We first introduce the notation and some preliminaries; after that, we recall the basic definitions and results about Bohr almost periodic sequences, Weyl almost periodic sequences and Besicovitch almost periodic sequences (cf. Subsection \ref{s1n}). Our main results concerning the abstract nonautonomous first-order difference equations are given in Section \ref{s2n}. In Theorem \ref{zajeb-e}, we analyze 
the existence and uniqueness of almost periodic type solutions to \eqref{zajeb-equ00}. After that, we analyze the existence and uniqueness of almost periodic type solutions to the following abstract nonautonomous difference inclusion
\begin{align*}
Cx(k+1)\in {\mathcal A}(k)x(k)+Cf(k),\quad k\in {\mathbb Z},
\end{align*} 
where $C$ is a linear continuous operator on $Y,$ ${\mathcal A}(k)$ is an MLO in $Y$ and $[{\mathcal A}(k)]^{-1}C\in L(Y)$ for all $k\in {\mathbb Z};$ cf. Theorem \ref{zajeb-mlo}. 
The abstract difference equations
\begin{align*}
CB(k+1)u(k+1)=A(k)u(k)+Cf(k),\quad k\in {\mathbb Z}
\end{align*}
and 
\begin{align*}
B(k+1)Cu(k+1)=A(k)u(k)+Cg(k),\quad k\in {\mathbb Z},
\end{align*}
where $C$ is a linear continuous operator on $Y,$ $A(k)$ and $B(k)$ are linear operators on $Y$ ($k\in {\mathbb Z}$), $f: {\mathbb Z} \rightarrow Y$ and $g: {\mathbb Z} \rightarrow Y$ are given sequences, are analyzed in Theorem \ref{zajeb-mlo1} and Theorem \ref{zajeb-mlo2}, respectively. 

The abstract nonautonomous higher-order difference equations are investigated in Section \ref{s2nn}. In particular, we consider the following abstract nonautonomous second-order difference equation:
\begin{align*}
CA_{2}(k+2) u(k+2)+CA_{1}(k+1)u(k+1)+A_{0}u(k)=Cf(k),\quad k\in {\mathbb Z},
\end{align*}
where $f: {\mathbb Z} \rightarrow Y$, $C$ is a linear continuous operator on $Y$ and $A_{2}(k),\ A_{1}(k),\ A_{0}(k)$ are linear operators on $Y$ for all $k\in {\mathbb Z}.$ 

We provide several illustrative examples, observations and useful remarks about considered themes.
 As mentioned in the abstract, our results seem to be completely new even in the setting of Banach spaces; here, we also present some new applications in $E_{l}$-type spaces and their projective limits (cf. \cite{x263} and \cite{FKP} for more details about the generation of various classes of $(a,k)$-regularized $C$-resolvent families in $E_{l}$-type spaces, an important class of Fr\' echet function spaces). Before proceeding any further, we would like to emphasize that this is probably the first research article in the existing literature which concerns the use of $C$-regularized resolvents of (multivalued) linear operators in the theory of abstract nonautonomous difference equations.
\vspace{0.1cm}

\noindent {\bf Notation and preliminaries.} If  $Y$ is a Hausdorff locally convex space\index{sequentially complete locally convex space!Hausdorff} over the field of complex numbers, then
the abbreviation $\circledast_{Y}$ stands for the fundamental system of seminorms\index{system of seminorms} which defines the topology of $Y$; ${\rm I}$ denotes the identity operator on $Y$. If $Y$ is sequentially complete, then we simply write that $Y$ is an SCLCS. If $X$ is also a Hausdorff locally convex space over the field ${\mathbb C}$, 
then $L(X,Y)$ denotes the space consisting of all continuous linear mappings\index{continuous linear mapping} from $X$ into $Y$; $L(X)\equiv L(X,X)$. If $Y_{1},\ ..., \ Y_{k}$ are locally convex spaces, then $Y_{1}\times ...\times Y_{k}$ is a locally convex space and the fundamental system of seminorms which defines the topology on $Y_{1}\times ...\times Y_{k}$ is given by $\kappa(y_{1},...,y_{k}):=\kappa_{1}(y_{1})+...+\kappa_{k}(y_{k})$, $y_{1}\in Y_{1},...,$ $y_{k}\in Y_{k}$, where $\kappa_{j}(\cdot)$ runs through the set $\circledast_{j}$ for $1\leq j\leq k.$ 
For more details about the multivalued linear operators (MLOs) in locally convex spaces, we refer the reader to \cite{FKP}; we will use the same notion and notation as in this monograph.
By a trigonometric polynomial $P : {\mathbb R} \rightarrow Y$ we mean any linear combination of functions like
$
e^{i \lambda t}y,
$
where $\lambda$ is a real number and $y\in  Y$. By
$\Gamma (\cdot)$ we denote the Euler Gamma function; finally, if
$\gamma>0,$ then we define $g_{\gamma}(t):=t^{\gamma-1}/\Gamma(\gamma),$ $t>0.$

\subsection{Generalized almost periodic type sequences in locally convex spaces}\label{s1n}

In the sequel, we will always assume that $Y$ is an SCLCS.
If $F : {\mathbb Z} \rightarrow Y$ is a given sequence, $\omega \in {\mathbb N}$ and $c\in {\mathbb C}\setminus \{0\},$  then we say that $F(\cdot)$ is $(\omega,c)$-periodic if $F(k+\omega)=cF(k)$ for all $k\in {\mathbb Z};$ cf. \cite{l}, \cite{fila} and the references cited therein for more details about $(\omega,c)$-(almost) periodic functions and their applications given in the Banach space setting.
We need the following notion, as well (cf. \cite{nizovi} for more details):

\begin{defn}\label{trbusacc}
Suppose that $p>0$ and $F : {\mathbb Z}\rightarrow Y$ is a given sequence. Then we say that: 
\begin{itemize}
\item[(i)] $F(\cdot)$ is (Bohr) almost periodic if, for every $\epsilon>0$ and $\kappa \in \circledast_{Y}$, there exists a finite real number
$l>0$ such that for each $t\in {\mathbb Z}$ there exists $\tau \in {\mathbb Z} \cap [t,t+l]$ such that
\begin{align*}
\kappa\bigl(F(k+{\bf \tau})-F(k)\bigr) \leq \epsilon,\quad k\in {\mathbb Z};
\end{align*}
\item[(ii)] $F(\cdot)$ belongs to the class $B-e-W_{ap}^{p}({\mathbb Z}  : Y)$ if, for every $\epsilon>0$ and $\kappa \in \circledast_{Y}$, there exist a trigonometric polynomial $P:  {\mathbb Z} \rightarrow Y$ and an integer $l\in {\mathbb N}$ such that 
\begin{align*}
l^{-1}\sum_{j=s}^{s+l} \Bigl[\kappa\bigl( F(j)-P(j)\bigr)\Bigr]^{p} \leq \epsilon ,\quad s\in {\mathbb Z};
\end{align*}
\item[(iii)] $F(\cdot)$ is Besicovitch-$p$-almost periodic if, for every $\epsilon>0$ and $\kappa \in \circledast_{Y}$, there exists a trigonometric polynomial $P:  {\mathbb Z} \rightarrow Y$ such that
\begin{align*}
\limsup_{l\rightarrow +\infty}\Biggl\{ l^{-1}\cdot \sum_{j=-l}^{l}\Bigl[\kappa\bigl( F(j)-P(j)\bigr)\Bigr]^{p}\Biggr\}<\epsilon.
\end{align*}
\end{itemize}
\end{defn}

It can be simply proved that, in parts (ii) and (iii), we can use a Bohr almost periodic function $H :  {\mathbb Z} \rightarrow Y$ in place of the trigonometric polynomial $P:  {\mathbb Z}\rightarrow Y$. Using the discrete H\"older inequality and a simple argumentation (cf. also the proof of \cite[Proposition 2.3]{abas-vlade}), we can simply show that a bounded sequence $F : {\mathbb Z} \rightarrow Y$ belongs to the class $B-e-W_{ap}^{p}({\mathbb Z} : Y)$ [is Besicovitch-$p$-almost periodic] if and only if $F(\cdot)$ belongs to the class $B-e-W_{ap}^{1}({\mathbb Z} : Y)$ [is Besicovitch-$1$-almost periodic]; cf. also the formulation of Theorem \ref{zajeb-e}(iv)-(v) below, where we have assumed that $p=1.$

We need the following results, which can be deduced in the same manner as for Bohr almost periodic functions (cf. \cite{note} for more details):

\begin{lem}\label{bochner1233210}
\begin{itemize}
\item[(i)]
Suppose that $\alpha \in {\mathbb C}$ and $\beta \in {\mathbb C}$. If 
the  sequences $F : {\mathbb Z}  \rightarrow Y$ and $G : {\mathbb Z} \rightarrow Y$ are Bohr almost periodic, then the sequence $(\alpha F+\beta G)(\cdot)$ is likewise Bohr almost periodic.
\item[(ii)]
Suppose that $k\in {\mathbb N}$, $Y_{i}$ is a locally convex space for $1\leq i\leq k$ and 
the sequence $F_{i} : {\mathbb Z} \rightarrow Y_{i}$ is Bohr almost periodic for $1\leq i\leq k$. Then the sequence $(F_{1},...,F_{k}): {\mathbb Z}\rightarrow Y_{1}\times ...\times Y_{k}$ is Bohr almost periodic.
\end{itemize}
\end{lem}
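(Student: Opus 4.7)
The natural strategy is to prove (ii) first and then derive (i) from it, since the crux of (i) is producing common almost-periods for $F$ and $G$ with respect to the same seminorm, which is exactly what (ii) packages as a single statement about the pair $(F,G)$.

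For (ii), my plan is to run the classical Bochner-type characterization in the discrete locally convex setting. Namely, I will establish (or, following the paper's reference to \cite{note}, invoke) the equivalence that a sequence $F_i:\mathbb{Z}\to Y_i$ is Bohr almost periodic if and only if the family of translates $\{F_i(\cdot+h):h\in\mathbb{Z}\}$ is relatively compact in $\ell^{\infty}(\mathbb{Z},Y_i)$ endowed with the topology of uniform convergence generated by each $\kappa_i\in\circledast_i$; the sequential completeness of each $Y_i$ is what makes this translate-family approach go through. Granting this, the family of translates of $(F_1,\ldots,F_k):\mathbb{Z}\to Y_1\times\cdots\times Y_k$ sits inside the Cartesian product of the translate families of the $F_i$, which is relatively compact in the product topology precisely because a finite product of relatively compact sets is relatively compact. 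Since the fundamental system of seminorms on $Y_1\times\cdots\times Y_k$ is given by sums $\kappa_1+\cdots+\kappa_k$, this product topology coincides with the uniform topology on $\ell^{\infty}(\mathbb{Z},Y_1\times\cdots\times Y_k)$ for each such seminorm, and the Bochner characterization delivers Bohr almost periodicity of the product sequence.

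For (i), I apply (ii) to the pair $(F,G):\mathbb{Z}\to Y\times Y$ to obtain a Bohr almost periodic sequence $H(k):=(F(k),G(k))$. Fix $\epsilon>0$ and $\kappa\in\circledast_Y$, and work with the product seminorm $\tilde\kappa(y_1,y_2):=\kappa(y_1)+\kappa(y_2)$. By (ii), there is $l>0$ such that every $t\in\mathbb{Z}$ admits $\tau\in\mathbb{Z}\cap[t,t+l]$ with $\tilde\kappa(H(k+\tau)-H(k))\le\epsilon/(|\alpha|+|\beta|+1)$ for all $k\in\mathbb{Z}$. From
\[
\kappa\bigl((\alpha F+\beta G)(k+\tau)-(\alpha F+\beta G)(k)\bigr)\le |\alpha|\,\kappa(F(k+\tau)-F(k))+|\beta|\,\kappa(G(k+\tau)-G(k)),
\]
and the fact that each summand on the right is dominated by the corresponding piece of $\tilde\kappa(H(k+\tau)-H(k))$, the whole expression is at most $(|\alpha|+|\beta|)\epsilon/(|\alpha|+|\beta|+1)<\epsilon$. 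This verifies Bohr almost periodicity of $\alpha F+\beta G$ with the same $l$.

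The main obstacle I expect is the Bochner-type characterization itself in the locally convex sequence setting: in the classical Banach space case one uses norm-relative compactness of translates, while here one must work with the family of seminorms $\circledast_Y$ and verify the characterization seminorm by seminorm. Since the paper explicitly defers the details to \cite{note}, I would invoke that reference for the characterization rather than reprove it; if one prefers an elementary route avoiding Bochner's criterion, the alternative is to show directly that the intersection of the relatively dense sets of $(\epsilon/k)$-almost periods corresponding to $(F_i,\kappa_i)$ is itself relatively dense, which is the standard (but more technical) argument that underlies the classical theory of common almost-periods.
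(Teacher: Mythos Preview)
The paper does not actually give a proof of this lemma: it only remarks that the results ``can be deduced in the same manner as for Bohr almost periodic functions (cf.\ \cite{note} for more details).'' Your route---establishing (ii) via the Bochner translate-compactness characterization and then reading off (i) from (ii) through the continuous linear map $(y_{1},y_{2})\mapsto \alpha y_{1}+\beta y_{2}$---is exactly one of the standard arguments this sentence is pointing to, so your proposal is correct and entirely consistent with the paper's (deferred) approach.
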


For the sake of simplicity and better exposition, we will not consider the abstract semilinear nonautonomous difference inclusions henceforth.

\section{Abstract nonautonomous first-order difference inclusions}\label{s2n}

Let us consider the abstract nonautonomous difference equation
\begin{align}\label{zajeb-equ}
x(k+1)=A(k)x(k)+f(k),\quad k\in {\mathbb Z},
\end{align}
where $A(k)\in L(Y),$  $k\in {\mathbb Z}.$ Our first structural result reads as follows:

\begin{thm}\label{zajeb-e}
Suppose that for each $k\in {\mathbb Z}$ and $\kappa \in \circledast_{Y}$ there exists a finite real number $c^{\kappa}(k)>0$ such that $\kappa(A(k)x)\leq c^{\kappa}(k)\kappa(x)$ for all $x\in Y$, and 
\begin{align}\label{rac}
\sum_{v=1}^{+\infty}\Bigl[ c^{\kappa}(k-v)\cdot ... \cdot c^{\kappa}(k-1)\Bigr] <+\infty , \quad k\in {\mathbb Z}.
\end{align}
Suppose, further, that for each $\kappa \in \circledast_{Y}$ we have $\sup_{k\in {\mathbb Z}}c^{\kappa}(k)<+\infty .$ 
Then we have the following:
\begin{itemize}
\item[(i)] If the sequence $f : {\mathbb Z} \rightarrow Y$ is bounded, then the sequence
\begin{align}\label{mileva}
x(k):=f(k-1)+\sum_{j=-\infty}^{k-2}\Biggl[ A(k-1)\cdot ...\cdot A(j+1)\Biggr] f(j),\quad k\in {\mathbb Z},
\end{align}
is well-defined and $x(\cdot)$ is a solution of \eqref{zajeb-equ}.
\item[(ii)] If the sequence $k\mapsto f(k),$ $k\in {\mathbb Z}$ is Bohr almost periodic and, for every relatively compact set $B\subseteq Y,$ $\kappa \in \circledast_{Y}$ and $\epsilon>0,$  there exists a finite real number
$L>0$ such that for each $t_{0}\in {\mathbb Z}$ there exists $\tau \in B(t_{0},L)\cap {\mathbb Z}$ such that
\begin{align}\label{mash}
\kappa \Bigl(  A(k+\tau)x-A(k)x\Bigr)\leq \epsilon,\quad k\in {\mathbb Z},\ x\in B,
\end{align}
then a unique almost periodic solution $x(\cdot)$ of \eqref{zajeb-equ} is given by \eqref{mileva}.
\item[(iii)] If $\omega \in {\mathbb N},$ $c\in {\mathbb C}\setminus \{0\},$ the sequence $k\mapsto f(k),$ $k\in {\mathbb Z}$ is $(\omega,c)$-periodic and $A(k+\omega)=A(k)$ for all $k\in {\mathbb Z},$ 
then an $(\omega,c)$-periodic solution $x(\cdot)$ of \eqref{zajeb-equ} is given by \eqref{mileva}. The uniqueness of  $(\omega,c)$-periodic solutions of \eqref{zajeb-equ} holds if $|c|\geq 1.$
\item[(iv)] If the sequence $k\mapsto f(k),$ $k\in {\mathbb Z}$ is bounded and belongs to the class $B-e-W_{ap}^{1}({\mathbb Z}: Y)$ as well as, for every relatively compact set $B\subseteq Y,$ $\kappa \in \circledast_{Y}$ and $\epsilon>0,$  there exist a  trigonometric polynomial $P:  {\mathbb Z} \rightarrow L(Y)$ and an integer $l\in {\mathbb N}$ such that 
\begin{align*}
l^{-1}\sum_{j=s}^{s+l} \Bigl[\kappa\bigl(A(j)x-P(j)x\bigr)\Bigr] \leq \epsilon ,\quad s\in {\mathbb Z},\ x\in B,
\end{align*}
then the sequence $x(\cdot)$, given by \eqref{mileva}, is a bounded solution  of \eqref{zajeb-equ} and belongs to the class $B-e-W_{ap}^{1}({\mathbb Z}: Y).$ 
\item[(v)] If the sequence $k\mapsto f(k),$ $k\in {\mathbb Z}$ is bounded and Besicovitch-$1$-almost periodic as well as, for every relatively compact set $B\subseteq Y,$ $\kappa \in \circledast_{Y}$ and $\epsilon>0,$  there exists a  trigonometric polynomial $P:  {\mathbb Z} \rightarrow L(Y)$ such that 
\begin{align}\label{nvid}
\limsup_{l\rightarrow +\infty}\Biggl\{ l^{-1}\cdot \sup_{x\in B}\sum_{j=-l}^{l}\Bigl[\kappa\bigl( A(j)x-P(j)x\bigr)\Bigr]\Biggr\} \leq \epsilon ,
\end{align}
then the sequence $x(\cdot)$, given by \eqref{mileva}, is a bounded, Besicovitch-$1$-almost periodic solution of \eqref{zajeb-equ}. 
\end{itemize}
\end{thm}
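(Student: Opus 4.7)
The plan is to handle parts~(i) and~(iii) by direct calculation, derive uniqueness from a single homogeneous contraction estimate, and reduce the existence parts (ii), (iv), (v) to a convolution formula for the difference $y(k):=x(k+\tau)-x(k)$.

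For part~(i), fix $\kappa\in\circledast_Y$; the standing bound gives
\[
\kappa\bigl([A(k-1)\cdots A(j+1)]f(j)\bigr)\le c^\kappa(k-1)\cdots c^\kappa(j+1)\cdot\sup_m\kappa(f(m)),
\]
so reindexing $v=k-1-j$ majorises the series \eqref{mileva} by \eqref{rac}, and sequential completeness of $Y$ yields absolute convergence. Splitting off the $j=k-1$ term in the formula for $x(k+1)$ and factoring $A(k)$ out of the remaining tail verifies $x(k+1)=A(k)x(k)+f(k)$. Part~(iii) follows by substituting $j\mapsto j-\omega$ in \eqref{mileva} evaluated at $k+\omega$, using $A(\cdot+\omega)=A(\cdot)$ and $f(\cdot+\omega)=cf(\cdot)$ to factor out the scalar $c$. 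The uniqueness statements in (ii), (iii) (when $|c|\ge 1$), (iv) and (v) all reduce to the same homogeneous estimate: every bounded solution $y$ of $y(k+1)=A(k)y(k)$ satisfies $\kappa(y(k))\le c^\kappa(k-1)\cdots c^\kappa(k-v)\cdot\sup_{m\le k}\kappa(y(m))$ for each $v$, and the right-hand side vanishes as $v\to\infty$ by \eqref{rac}, forcing $y\equiv 0$. The $(\omega,c)$-periodic case with $|c|\ge 1$ is covered because $y(k-n\omega)=c^{-n}y(k)$ keeps $y$ bounded on $(-\infty,k]$.

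For existence in (ii), (iv), (v), observe that $y(k):=x(k+\tau)-x(k)$ satisfies $y(k+1)=A(k)y(k)+g_\tau(k)$ with
\[
g_\tau(j):=\bigl[A(j+\tau)-A(j)\bigr]x(j+\tau)+\bigl[f(j+\tau)-f(j)\bigr],
\]
and boundedness of $y,g_\tau$ together with part~(i) gives
\[
y(k)=g_\tau(k-1)+\sum_{j=-\infty}^{k-2}[A(k-1)\cdots A(j+1)]g_\tau(j).
\]
The Bohr, Weyl or Besicovitch hypothesis on $f$ directly controls $f(j+\tau)-f(j)$. Handling $[A(j+\tau)-A(j)]x(j+\tau)$ requires the range of $x$ to lie in a set on which \eqref{mash} (resp.\ \eqref{nvid} or its Weyl analogue) can be invoked. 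The plan is to verify from \eqref{mileva} that $\{x(k):k\in\mathbb{Z}\}$ is relatively compact in $Y$: approximate $x(k)$ uniformly in $k$, within seminorm $\kappa$, by the finite partial sum $x_V(k):=f(k-1)+\sum_{v=1}^V A(k-1)\cdots A(k-v)f(k-v-1)$ using the tail bound supplied by \eqref{rac} and $\sup_m\kappa(f(m))<+\infty$, and then deduce total boundedness of $\{x_V(k)\}$ from the relative compactness of $\overline{f({\mathbb Z})}$ together with the operator-almost-periodicity hypothesis restricted to this (already compact) set.

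With the range of $x$ enclosed in a relatively compact $B\supseteq\overline{f({\mathbb Z})}$, hypothesis \eqref{mash} produces a common $\tau$ making $\kappa(g_\tau(j))\le 2\epsilon$ uniformly in $j$; inserted into the convolution representation of $y$, combined with $\sup_k c^\kappa(k)<+\infty$ and \eqref{rac}, this gives $\kappa(y(k))\le C_\kappa\epsilon$ for every $k$, proving the Bohr almost periodicity of $x$ in (ii). Parts~(iv) and (v) follow the same template, with pointwise $\epsilon$-bounds on $g_\tau$ replaced by Ces\`aro-type averages over $j$; the interchange of summation order is legitimised by the uniform bound on $g_\tau$ and the absolute convergence from \eqref{rac}, and \eqref{nvid} (respectively the Weyl counterpart assumed in (iv)) is then invoked for the same $B$. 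The decisive technical obstacle is the relative-compactness step for the range of $x$: without it one cannot apply \eqref{mash} or \eqref{nvid} to $x(\cdot+\tau)$, and it is here that the interplay between \eqref{rac}, the uniform bound $\sup_k c^\kappa(k)<+\infty$ and the almost periodicity of $f$ must be exploited most carefully. Once that is granted, the remainder of the argument is a direct majorisation.
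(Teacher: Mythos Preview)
Your treatment of (i) and (iii) is essentially the paper's argument, and your uniqueness argument for (ii) via the homogeneous contraction estimate is a clean variant of what the paper does with the supremum formula. For the \emph{existence} part of (ii) you take a genuinely different route: the paper proves directly, by induction on $v$, that each individual term
\[
F_{v}(k)=A(k-1)\cdots A(k-v)\,f(k-1-v)
\]
is Bohr almost periodic (using only that $\overline{R(f)}$ is relatively compact and that $k\mapsto A(k)|_{\overline{R(f)}}$ is almost periodic in $l_{\infty}(\overline{R(f)}:Y)$), and then invokes uniform convergence of the series. Your approach via the auxiliary equation $y(k+1)=A(k)y(k)+g_{\tau}(k)$ for $y=x(\cdot+\tau)-x(\cdot)$ is also viable, but it forces you to first establish relative compactness of $R(x)$, whereas the paper never needs this: the term-by-term argument bootstraps compactness one layer at a time, since each $F_{v}$ is almost periodic and therefore has relatively compact range, which is exactly what is needed to treat $F_{v+1}(k)=A(k-1)F_{v}(k-1)$.

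For (iv) and (v), however, your template breaks down. Two problems. First, the Weyl and Besicovitch classes are defined by \emph{approximation by trigonometric polynomials}, not by smallness of translates $x(\cdot+\tau)-x(\cdot)$; your quantity $g_{\tau}$ is not what has to be estimated, and the hypothesis \eqref{nvid} concerns $A(j)x-P(j)x$, not $A(j+\tau)x-A(j)x$. Second, and more seriously, in (iv)--(v) the sequence $f$ is merely bounded, so $\overline{R(f)}$ need not be relatively compact; your argument for the relative compactness of $R(x)$ therefore collapses, and you have no set $B$ on which to invoke the operator hypothesis. The paper sidesteps both issues by applying the hypothesis not to $R(f)$ or $R(x)$, but to the range of the \emph{trigonometric polynomial} $P_{0}$ that approximates $f$: one writes
\[
A(j-1)f(j-2)-P(j-1)P_{0}(j-2)=\bigl[A(j-1)-P(j-1)\bigr]P_{0}(j-2)+A(j-1)\bigl[f(j-2)-P_{0}(j-2)\bigr],
\]
estimates the first summand via \eqref{nvid} with the relatively compact set $B=R(P_{0})$, and the second via the Besicovitch hypothesis on $f$. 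This yields $F_{1}\in B_{ap}(\mathbb{Z}:Y)$, and the same decomposition iterates to all $F_{v}$. You will need to rebuild (iv)--(v) along these lines. (Incidentally, the theorem does not assert uniqueness in (iv) or (v), so that part of your reduction is moot.)
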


\begin{proof}
It can be simply proved that the series  $ \sum_{j=-\infty}^{k-2}[ A(k-1)\cdot ...\cdot A(j+1)] f(j)$ is absolutely convergent since for each $\kappa \in \circledast_{Y}$ we have
\begin{align*}
\kappa &\Biggl(\sum_{j=-\infty}^{k-2}\Bigl[ A(k-1)\cdot ...\cdot A(j+1)\Bigr] f(j) \Biggr)\leq \sum_{j=-\infty}^{k-2}\Bigl[ c^{\kappa}(j+1)\cdot ... \cdot c^{\kappa}(k-1)\Bigr]\kappa \bigl( f(j)\bigr)
\\& =\sum_{v=1}^{+\infty}\Bigl[ c^{\kappa}(k-+v)\cdot ... \cdot c^{\kappa}(k-1)\Bigr]\kappa \bigl( f(k-1-v)\bigr);
\end{align*}
cf. also \eqref{rac}.
Now it is quite simple to prove that the function $x(\cdot),$ given by \eqref{mileva}, is a solution of \eqref{zajeb-equ}: if $k\in {\mathbb Z},$ then we have
\begin{align*}
x(k+1)&=f(k)+\sum_{j=-\infty}^{k-1}\Bigl[ A(k)\cdot ...\cdot A(j+1)\Bigr] f(j) 
\\& =A(k)f(k-1)+A(k)\sum_{j=-\infty}^{k-2}\Bigl[ A(k-1)\cdot ...\cdot A(j+1)\Bigr] f(j) +f(k)
\\&=A(k)x(k)+f(k),
\end{align*}
which completes the proof of (i). In order to prove (ii), observe first that the range $B:=R(f)$ is relatively compact in $Y.$ Since the sequence $f(\cdot-1)$ is Bohr almost periodic and 
$$
x(k)=f(k-1)+\sum_{v=1}^{+\infty}\Bigl[A(k-1)\cdot ... \cdot A(k-v)\Bigr] \cdot f(k-1-v),\quad k\in {\mathbb Z},
$$
where the above series converges uniformly due to the Weierstrass criterion and condition \eqref{rac}, as easily shown, Lemma \ref{bochner1233210}(i) implies that it suffices to show that for each fixed integer $v\in {\mathbb N}$ the sequence $F_{v}(\cdot),$ given by
$$ 
F_{v}(k):=\bigl[A(k-1)\cdot ... \cdot A(k-v)\bigr]\cdot f(k-1-v),\quad k\in {\mathbb Z},
$$ 
is Bohr almost periodic. In order to that, we can apply the mathematical induction and the fact that the sequence $F_{1}(\cdot)$ is Bohr almost periodic. To show this, observe first that the prescribed assumptions imply that the function $G: {\mathbb Z} \rightarrow l_{\infty}(B: Y),$ given by $[G(k)](x):=A(k)x,$ $k\in {\mathbb Z},$ $x\in B$ is Bohr almost periodic. After that, we can apply Lemma \ref{bochner1233210}(ii) to see that, for every $\kappa \in \circledast_{Y}$ and $\epsilon>0,$  there exists a finite real number
$L>0$ such that for each $t_{0}\in {\mathbb Z}$ there exists $\tau \in B(t_{0},L)\cap {\mathbb Z}$ such that \eqref{mash} holds and
$$
\kappa \Bigl(  f(k+\tau)-f(k)\Bigr)\leq \epsilon,\quad k\in {\mathbb Z}. 
$$
Then the required conclusion follows form the estimates:
\begin{align*}
& \kappa \bigl( F_{1}(k+\tau)-F_{1}(k)\bigr)
\\& \leq c^{\kappa}(k-1+\tau)\kappa \bigl( f(k-2+\tau)-f(k-2) \bigr)+\sup_{x\in B}\kappa \bigl( A(k-1+\tau)x-A(k-1)x\bigr)
\\& \leq \Bigl[\sup_{s\in {\mathbb Z}}c^{\kappa}(s)\Bigr] \cdot \kappa \bigl( f(k-2+\tau)-f(k-2) \bigr)+\sup_{x\in B;s\in {\mathbb Z}}\kappa \bigl( A(s+\tau)x-A(s)x\bigr),
\end{align*}
for any $k\in {\mathbb Z}.$ To complete the proof of (ii), we only need to prove the uniqueness of Bohr almost periodic solutions of \eqref{zajeb-equ}. If $x(\cdot)$ is Bohr almost periodic and solves \eqref{zajeb-equ} with $f(\cdot)\equiv 0,$ then for each $k\in {\mathbb Z}$ we have $v(k+1)=A(-k-1)v(k)$, where $v(\cdot):=x(-\cdot),$ and therefore
$
v(k)=[A(-k)\cdot ... \cdot A(-1)]\cdot v(0),$ $k\in {\mathbb N}.$
Let $\kappa \in \circledast_{Y}$ and $\epsilon>0$ be fixed. Using the fact that
\begin{align}\label{rekaoqq}
\lim_{k\rightarrow +\infty}\Bigl[c^{\kappa}(-1)\cdot ...\cdot c^{\kappa}(-k)\Bigr]=0,
\end{align}
which immediately follows from \eqref{rac} with $k=0,$ we get the existence of an integer $k_{0}\in {\mathbb N}$ such that $\kappa(v(k))\leq \epsilon$ for $k\geq k_{0}.$ Since the number $\epsilon>0$ was arbitrary, the supremum formula established in \cite[Proposition 2.5]{vlad} yields that $\kappa(v(k))=0$ for each $k\in {\mathbb N}_{0}$. 
Now, since the seminorm $\kappa(\cdot)$ was arbitrary, the above yields $v(k)=0$ for all $k\in {\mathbb N}_{0}$ so that $u(k)=0$ for all $k\in -{\mathbb N} _{0}.$ This clearly implies $x(1)=A(0)x(0)=A(0)0=0$ and inductively $x(k)=0$ for all $k\in {\mathbb N},$ which completes the proof of (ii). Suppose now that $\omega \in {\mathbb N},$ $c\in {\mathbb C}\setminus \{0\},$ the sequence $k\mapsto f(k),$ $k\in {\mathbb Z}$ is $(\omega,c)$-periodic and $A(k+\omega)=A(k)$ for all $k\in {\mathbb Z}.$ Then it is clear that the sequence $x(\cdot),$ given by \eqref{mileva}, is $(\omega,c)$-periodic since
\begin{align*}
x(k+\omega)&=f(k-1+\omega)+\sum_{j=-\infty}^{k+\omega-2}\Biggl[ A(k-1+\omega)\cdot ...\cdot A(j+1)\Biggr] f(j)
\\& =cf(k-1)+\sum_{v=1}^{+\infty}\Biggl[ A(k-1+\omega)\cdot ...\cdot A(k-v+\omega)\Biggr] f(k-v+\omega-1)
\\& =cf(k-1)+c\sum_{v=1}^{+\infty}\Biggl[ A(k-1)\cdot ...\cdot A(k-v)\Biggr] f(k-v-1)
\\&=cf(k-1)+c\sum_{j=-\infty}^{k-2}\Biggl[ A(k-1)\cdot ...\cdot A(j+1)\Biggr] f(j)=cx(k),\quad k\in {\mathbb Z}.
\end{align*}
Suppose now that $|c|\geq 1$ and $x(k+1)=A(k)x(k)$, $k\in {\mathbb Z}.$ Since $A(k+\omega)=A(k)$, $k\in {\mathbb Z},$ we may assume without loss of generality that for each $\kappa \in \circledast_{Y}$ we have $c^{\kappa}(k+\omega)=c^{\kappa}(k),$ $k\in {\mathbb Z}.$ Then $\kappa (x(k+1))\leq c^{\kappa}(k)\kappa (x(k))$, $k\in {\mathbb Z},$ $\kappa \in \circledast_{Y},$ which inductively implies that 
\begin{align*}
|c|\kappa (x(k))=\kappa (x(k+\omega))\leq c^{\kappa}(k+\omega-1)\cdot ...\cdot c^{\kappa}(k)\kappa (x(k)), \quad k\in {\mathbb Z},\ \kappa \in \circledast_{Y}.
\end{align*}
If we assume that $x(\cdot)$ is not equal to the zero sequence, then the above simply implies the existence of a seminorm $\kappa \in \circledast_{Y}$ and  an integer $k_{0}\in [1,\omega]$ such that, for every $l\in {\mathbb Z},$ we have
\begin{align*}
|c|\leq c^{\kappa}\bigl(k_{0}+(1+l)\omega-1 \bigr)\cdot ...\cdot c^{\kappa}\bigl(k_{0}+l\omega \bigr), \quad l\in {\mathbb Z}.
\end{align*}
On the other hand, \eqref{rekaoqq} implies $\lim_{l\rightarrow+\infty}[c^{\kappa}(k_{0})\cdot ... \cdot c^{\kappa}(k_{0}-1-l\omega)]=0,$ which is a contradiction since $|c|\geq 1$  and $c^{\kappa}(k+\omega)=c^{\kappa}(k),$ $k\in {\mathbb Z}.$
This completes the proof of (iii). 

The proofs of parts (iv) and (v) are similar to proof of (ii); we will only outline the main details of  proof for (v). The collection of all bounded, Besicovitch-$1$-almost periodic sequence from ${\mathbb Z}$ into $Y$, denoted by $B_{ap}({\mathbb Z}: Y),$ is a vector space with the usal operations and $B_{ap}({\mathbb Z}: Y)$ is translation invariant; moreover, if  $u_{v} :{\mathbb N} \rightarrow Y$ is a bounded, Besicovitch-$1$-almost periodic sequence for each $v\in {\mathbb N}$ and $\sum_{v=1}^{\infty}u_{v}(k)=u(k)$ for each $k\in {\mathbb Z},$ where the above series is absolutely convergent, then the sequence 
$u :{\mathbb Z} \rightarrow Y$ is a bounded and Besicovitch-$1$-almost periodic, as well. To complete the proof, it remains to be proved that the sequence $F_{v} :{\mathbb Z} \rightarrow Y$ defined in part (ii) is bounded and Besicovitch-$1$-almost periodic for all $v\in {\mathbb N}.$ Let $\epsilon>0$ and $\kappa \in \circledast_{Y}$ be fixed. If $v=1,$ then we know that there exists a trigonometric polynomial $P_{0}:  {\mathbb Z}\rightarrow Y$ such that
\begin{align}\label{brest}
\limsup_{l\rightarrow +\infty}\Biggl\{ l^{-1}\cdot \sup_{x\in B}\sum_{j=-l}^{l}\Bigl[\kappa\Bigl( F(j)-P_{0}(j)\Bigr)\Bigr]\Biggr\}<\epsilon.
\end{align}
The set $B=R(P)$ is relatively compact in $Y$ and the prescribed assumption implies that there exists a trigonometric polynomial $P:  {\mathbb Z} \rightarrow L(Y)$ such that \eqref{nvid} holds. To deduce the required conclusion, we can simply use the estimates \eqref{nvid}, \eqref{brest} and decomposition
\begin{align*}
& A(j-1)f(j-2)-P(j-1)P_{0}(j-2)
\\&=\Bigl[ A(j-1)-P(j-1)\Bigr]P_{0}(j-2)+A(j-1)\Bigl[ f(j-2)-P(j-2)\Bigr],\quad j\in {\mathbb Z};
\end{align*}
if $v=2,$ then we can use the already established conclusion with $v=1$ and decomposition
\begin{align*}
& A(j-1)A(j-2)f(j-3)-P(j-1)-P(j-2)P_{0}(j-3)
\\&=\Bigl[ A(j-1)-P(j-1)\Bigr]P(j-2)P_{0}(j-3)
\\&+A(j-1)\Bigl[ A(j-2)f(j-3)-P(j-2)P_{0}(j-3)\Bigr],\quad j\in {\mathbb Z}.
\end{align*}
After that, we can proceed inductively and show that the sequence $F_{v}: {\mathbb Z} \rightarrow Y$ belongs to the space $B_{ap}({\mathbb Z}: Y)$ for each $v\in {\mathbb N}.$
\end{proof}

\begin{rem}\label{ogr}
\begin{itemize}
\item[(i)]
Suppose, in place of condition \eqref{rac}, that there exists $\alpha>0$ such that the set $\{(1+|k|)^{-\alpha}f(k)  :k\in {\mathbb Z}\}$ is bounded in $Y$ and
\begin{align*}
\sum_{v=1}^{+\infty}\Bigl[ c^{\kappa}(k-v)\cdot ... \cdot c^{\kappa}(k-1)\Bigr] \cdot v^{\alpha}<+\infty , \quad k\in {\mathbb Z}.
\end{align*}
Then the sequence $x(\cdot)$, given by \eqref{mileva}, is 
is well-defined, $x(\cdot)$ is a solution of \eqref{zajeb-equ} and the set $\{(1+|k|)^{-\alpha}x(k)  :k\in {\mathbb Z}\}$ is bounded in $Y$. We can also consider the exponentially bounded sequences $f(\cdot)$ in $Y.$
\item[(ii)] Condition \eqref{rac} particularly holds if for each $\kappa \in \circledast_{Y}$ there exists $c^{\kappa}\in (0,1)$ such that, for every $k\in {\mathbb Z},$ one has $c^{\kappa}(k) \leq c^{\kappa}.$
\item[(ii)] The uniqueness of $(\omega,c)$-periodic solutions of \eqref{zajeb-equ} does not hold if $|c|<1.$ Consider, for example, the case in which $\omega=1,$ $c=1/2$, $f(\cdot)\equiv 0,$ $A(k):=(1/2){\rm I},$ $k\in {\mathbb Z}$ and $x(k):=(1/2^{k})x,$ $k\in {\mathbb Z},$ where $x\in Y$ and $x\neq 0.$
\end{itemize}
\end{rem}

Suppose now that $C\in L(Y)$ and for each integer $k\in {\mathbb Z}$ we have that ${\mathcal A}(k)$ is an MLO in $Y$ and $[{\mathcal A}(k)]^{-1}C\in L(Y).$ Consider now the abstract nonautonomous difference inclusion
\begin{align}\label{zajeb}
Cx(k+1)\in {\mathcal A}(k)x(k)+Cf(k),\quad k\in {\mathbb Z}.
\end{align}
Under our assumptions, we have that \eqref{zajeb} is equivalent with $x(k)=[{\mathcal A}(k)]^{-1}C[x(k+1)-f(k)],$ $k\in {\mathbb Z},$ i.e., with 
\begin{align}\label{ztrans}
x(k)=\bigl[{\mathcal A}(k)\bigr]^{-1}Cx(k+1)-\bigl[{\mathcal A}(k)\bigr]^{-1}Cf(k),\quad k\in {\mathbb Z}.
\end{align}
Using the substitution $k\mapsto -k,$ $k\in {\mathbb Z},$ it readily follows that \eqref{ztrans} is equivalent with 
\begin{align*}
x(-k)=\bigl[{\mathcal A}(-k)\bigr]^{-1}Cx(-k+1)-\bigl[{\mathcal A}(-k)\bigr]^{-1}Cf(-k),\quad k\in {\mathbb Z},
\end{align*}
i.e., with
\begin{align*}
v(k)=\bigl[{\mathcal A}(-k)\bigr]^{-1}Cv(k-1)-\bigl[{\mathcal A}(-k)\bigr]^{-1}Cf(-k),\quad k\in {\mathbb Z},
\end{align*}
which is clearly equivalent with
\begin{align}\label{ztrans1}
v(k+1)=\bigl[{\mathcal A}(-k-1)\bigr]^{-1}Cv(k)-\bigl[{\mathcal A}(-k-1)\bigr]^{-1}Cf(-k-1),\quad k\in {\mathbb Z},
\end{align}
where $v(k):=u(-k),$ $k\in {\mathbb Z}.$ 
Using now the elementary argumentation involving Theorem \ref{zajeb-e} and the equivalence of \eqref{zajeb} and \eqref{ztrans1}, we immediately get the following result (the injectivity of regularizing operator $C$ is not required here):

\begin{thm}\label{zajeb-mlo}
Suppose that $C\in L(Y)$ and for each integer $k\in {\mathbb Z}$ we have that ${\mathcal A}(k)$ is an \emph{MLO} in $Y$ and $[{\mathcal A}(k)]^{-1}C\in L(Y).$
Suppose, further, that
for each $k\in {\mathbb Z}$ and $\kappa \in \circledast_{Y}$ there exists a finite real number $c^{\kappa}(k)>0$ such that $\kappa([{\mathcal A}(k)]^{-1}Cx)\leq c^{\kappa}(k)\kappa(x)$ for all $x\in Y$, \eqref{rac} holds,
and for each $\kappa \in \circledast_{Y}$ we have $\sup_{k\in {\mathbb Z}}c^{\kappa}(k)<+\infty .$ 
Then we have the following:
\begin{itemize}
\item[(i)] If the sequence $f : {\mathbb Z} \rightarrow Y$ is bounded, then the sequence $x(\cdot),$ given by
\begin{align}\label{milevai}
x(k):=-\bigl[{\mathcal A}(k)\bigr]^{-1}Cf(k)-\sum_{j=-\infty}^{k-2}\Biggl[ \bigl[{\mathcal A}(k)\bigr]^{-1}C\cdot ...\cdot \bigl[{\mathcal A}(-j-1)\bigr]^{-1}C\Biggr] f(-j-1),
\end{align}
for any $k\in {\mathbb Z},$
is well-defined and $x(\cdot)$ is a solution of \eqref{zajeb}.
\item[(ii)] If the sequence $k\mapsto [{\mathcal A}(k)]^{-1}Cf(k),$ $k\in {\mathbb Z}$ is Bohr almost periodic and, for every relatively compact set $B\subseteq Y,$ $\kappa \in \circledast_{Y}$ and $\epsilon>0,$  there exists a finite real number
$L>0$ such that for each $t_{0}\in {\mathbb Z}$ there exists $\tau \in B(t_{0},L)\cap {\mathbb Z}$ such that
$$
\kappa \Bigl(  \bigl[{\mathcal A}(k+\tau)\bigr]^{-1}Cx-\bigl[{\mathcal A}(k)\bigr]^{-1}Cx\Bigr)\leq \epsilon,\quad k\in {\mathbb Z},\ x\in B,
$$
then a unique almost periodic solution $x(\cdot)$ of \eqref{zajeb} is given by \eqref{milevai}.
\item[(iii)] If $\omega \in {\mathbb N},$ $c\in {\mathbb C}\setminus \{0\},$ the sequence $k\mapsto f(k),$ $k\in {\mathbb Z}$ is $(\omega,c)$-periodic and ${\mathcal A}(k+\omega)={\mathcal A}(k)$ for all $k\in {\mathbb Z},$ 
then an $(\omega,c)$-periodic solution $x(\cdot)$ of \eqref{zajeb} is given by \eqref{milevai}. The uniqueness of  $(\omega,c)$-periodic solutions of \eqref{zajeb} holds if $|c|\geq 1.$
\item[(iv)] If the sequence $k\mapsto [{\mathcal A}(k)]^{-1}Cf(k),$ $k\in {\mathbb Z}$ is bounded and belongs to the class $B-e-W_{ap}^{1}({\mathbb Z}: Y)$ as well as, for every relatively compact set $B\subseteq Y,$ $\kappa \in \circledast_{Y}$ and $\epsilon>0,$ there exist a  trigonometric polynomial $P:  {\mathbb Z} \rightarrow L(Y)$ and an integer $l\in {\mathbb N}$ such that 
\begin{align*}
l^{-1}\sum_{j=s}^{s+l} \Bigl[\kappa\Bigl(\bigl[{\mathcal A}(j)\bigr]^{-1}Cx-P(j)x\Bigr)\Bigr] \leq \epsilon ,\quad s\in {\mathbb Z},\ x\in B,
\end{align*}
then the sequence $x(\cdot)$, given by \eqref{milevai}, is a bounded solution  of \eqref{zajeb} and belongs to the class $B-e-W_{ap}^{1}({\mathbb Z}: Y).$ 
\item[(v)] If the sequence $k\mapsto [{\mathcal A}(k)]^{-1}Cf(k),$ $k\in {\mathbb Z}$ is bounded and Besicovitch-$1$-almost periodic as well as, for every relatively compact set $B\subseteq Y,$ $\kappa \in \circledast_{Y}$ and $\epsilon>0,$  there exists a  trigonometric polynomial $P:  {\mathbb Z} \rightarrow L(Y)$ such that 
\begin{align*}
\limsup_{l\rightarrow +\infty}\Biggl\{ l^{-1}\cdot \sup_{x\in B}\sum_{j=-l}^{l}\Bigl[\kappa\Bigl( \bigl[{\mathcal A}(j)\bigr]^{-1}Cx-P(j)x\Bigr)\Bigr]\Biggr\} \leq \epsilon ,
\end{align*}
then the sequence $x(\cdot)$, given by \eqref{milevai}, is a bounded, Besicovitch-$1$-almost periodic solution of \eqref{zajeb}. 
\end{itemize}
\end{thm}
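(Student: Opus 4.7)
The plan is to reduce everything to Theorem \ref{zajeb-e} by means of the time-reversal $v(k):=x(-k)$ and the observations made in the paragraphs just before the statement. Under the standing hypothesis $[\mathcal{A}(k)]^{-1}C\in L(Y)$, the inclusion \eqref{zajeb} is strictly equivalent to the equation \eqref{ztrans1}, that is,
\[
v(k+1)=A'(k)v(k)+g(k),\qquad k\in\mathbb{Z},
\]
with the continuous linear coefficient $A'(k):=[\mathcal{A}(-k-1)]^{-1}C$ and forcing $g(k):=-[\mathcal{A}(-k-1)]^{-1}Cf(-k-1)$. So the strategy is: verify the hypotheses of Theorem \ref{zajeb-e} for the pair $(A'(\cdot),g(\cdot))$, invoke the corresponding part of that theorem, and then undo the substitution $x(k)=v(-k)$ to recover the claimed solution formula \eqref{milevai} and the claimed qualitative properties.

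The seminorm estimate $\kappa([\mathcal{A}(k)]^{-1}Cx)\leq c^{\kappa}(k)\kappa(x)$ gives $\kappa(A'(k)x)\leq \tilde c^{\kappa}(k)\kappa(x)$ with $\tilde c^{\kappa}(k):=c^{\kappa}(-k-1)$; a straightforward re-indexing shows that the summability condition \eqref{rac} for $c^{\kappa}$ is equivalent to the same condition for $\tilde c^{\kappa}$, and the uniform bound $\sup_k c^{\kappa}(k)<+\infty$ transfers likewise. Applying Theorem \ref{zajeb-e}(i) to $(A'(\cdot),g(\cdot))$ produces a bounded solution $v(\cdot)$ given by the formula \eqref{mileva}; substituting back $A'$ and $g$ and then replacing $k$ by $-k$ reproduces precisely \eqref{milevai}, proving (i).

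For parts (ii)--(v), the task is to check that the hypothesis placed on the sequence $k\mapsto [\mathcal{A}(k)]^{-1}Cf(k)$ (respectively, on $k\mapsto [\mathcal{A}(k)]^{-1}C$ as a map into $l_{\infty}(B:Y)$) is equivalent, after the change of variable $k\mapsto -k-1$ and multiplication by $-1$, to the corresponding Bohr/equi-Weyl/Besicovitch/$(\omega,c)$-periodicity hypothesis on $g$ (respectively, on $A'$) required by Theorem \ref{zajeb-e}. This rests on two simple facts used repeatedly: each of the function classes in Definition \ref{trbusacc} is closed under the scalar operations and invariant under any affine change of the index $k\mapsto \pm k + a$ (for the equi-Weyl class, the invariance is uniform in the translation parameter $s$; for the Besicovitch class, it is built into the symmetric summation bounds $\sum_{j=-l}^{l}$), and if $P(j)$ is a trigonometric polynomial then so is $j\mapsto P(-j-1)$. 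Granting these, each of Theorem \ref{zajeb-e}(ii)--(v) applies to the transformed equation and delivers the corresponding conclusion for $v$; reversing time once more gives the assertion for $x$, using the same invariance of the classes under $k\mapsto -k$. For the uniqueness statement in (iii) with $|c|\geq 1$, one applies Theorem \ref{zajeb-e}(iii) directly to the transformed equation, noting that the $(\omega,c)$-periodicity of $x$ translates into $(\omega,c)$-periodicity of $v$.

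The main subtlety I anticipate is bookkeeping: one must be careful that the time-reversal and the index shift in the sum in \eqref{mileva} recombine correctly to give the product ordering $[\mathcal{A}(k)]^{-1}C\cdots [\mathcal{A}(-j-1)]^{-1}C$ appearing in \eqref{milevai}, and that the range of summation $j=-\infty,\ldots,k-2$ is preserved. Beyond this indexing, no new analytic ingredient is needed; the substance of the argument is entirely contained in Theorem \ref{zajeb-e}, and the injectivity of $C$ is nowhere required because the hypothesis $[\mathcal{A}(k)]^{-1}C\in L(Y)$ already makes the reformulation \eqref{ztrans} single-valued.
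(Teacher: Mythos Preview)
Your plan is correct and follows exactly the route the paper takes: the paper itself does not give a separate proof of this theorem but simply notes, in the paragraph preceding it, that the substitution $v(k)=x(-k)$ converts \eqref{zajeb} into \eqref{ztrans1} and then states that the result ``immediately'' follows from Theorem~\ref{zajeb-e}. Your sketch supplies more of the verification details (identifying $A'(k)$, $g(k)$, transferring the seminorm bounds and the various almost-periodicity hypotheses under $k\mapsto -k-1$, and flagging the index bookkeeping in recovering \eqref{milevai}) than the paper does, but the underlying reduction is identical.
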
 

\begin{rem}\label{ghj}
Suppose that, for every $k\in {\mathbb Z},$ there exists a linear continuous operator $D(k)\in L(Y)$ such that $D(k)\subseteq [{\mathcal A}(k)]^{-1}C$ and all remaining assumptions given in the formulation of Theorem \ref{zajeb-mlo} hold true with the operators $[{\mathcal A}(\cdot)]^{-1}C$ replaced by the operators $D(\cdot)$ therein. If the sequence $f : {\mathbb Z} \rightarrow Y$ is bounded, then the sequence $x(\cdot),$ given by replacing the operators $[{\mathcal A}(\cdot)]^{-1}C$ by the operators $D(\cdot)$ in \eqref{milevai},
is well-defined and $x(\cdot)$ is a solution of \eqref{zajeb}. Similarly, if we replace the operators $[{\mathcal A}(\cdot)]^{-1}C$ by the operators $D(\cdot)$ in \eqref{milevai} and all other appearances in parts (ii)-(v), then the conclusions clarified in (iv) and (v) continue to hold without any changes; in (ii), resp. (iii), we can only prove the existence of an almost periodic solution $x(\cdot)$ of \eqref{zajeb}, resp. an $(\omega,c)$-periodic solution $x(\cdot)$ of \eqref{zajeb}, for any $c\in {\mathbb C}\setminus \{0\}$.
\end{rem}

We will illustrate Theorem \ref{zajeb-mlo} with the following example:

\begin{example}\label{eel}
Let $E$ be $L^p(\mathbb R^n)$ ($1\leq p\leq\infty)$, $C_0(\mathbb R^n)$, $C_b(\mathbb R^n)$ or $BUC(\mathbb R^n),$ and let $0\leq l\leq n.$ 
The space $E_l$ is defined by $ E_l:=\{f\in E:f^{(\alpha)}\in E\text{ for all }\alpha \in {\mathbb N}_{0,l}\},$ where ${\mathbb N}_{0,l}:=\{\alpha \in {{\mathbb N_0^n}} : \alpha_{l+1}=...=\alpha_{n}=0\}$;
the family of seminorms\index{$E_l$-type spaces} $(q_{\alpha}(f):=||f^{(\alpha)}||_E,\ f\in E_l;\ \alpha\in {\mathbb N}_{0,l})$ induces a Fr\'echet topology on $E_l$. Let 
$\omega \in {\mathbb N},$ $a_{\eta , j}\in\mathbb C$, $0\leq|\eta|\leq N_{j}$, and let  the operator $B(j)f:=P_{j}(D)f\equiv\sum_{|\eta|\leq N_{j}}a_{\eta ,j}f^{(\eta)}$ act with its maximal distributional domain in $E_{l}$ for $1\leq j\leq \omega;$  
then $B(j)$ is a closed linear operator on $E_l$ for $1\leq j\leq \omega .$ Suppose, for the sake of brevity, that $\delta <0$ and
\begin{equation*}
\sup_{x\in\mathbb R^n}\Re\Biggl ( \sum_{|\eta|\leq N_{j}}a_{\eta,j}i^{|\eta|}x^{\eta}\Biggr)\leq\delta,\quad 1\leq j\leq \omega.
\end{equation*}
Further on,
let $r_{p}=n|(1/2)-(1/p)|$ if $E=L^p(\mathbb R^n)$ for some $p\in (1,\infty),$ and let $r\geq r_{p}$ in this case; otherwise, we assume that $r>n/2$. Let the continuous linear operator $C_{r,l,j}\in L(E_{l})$ has the same meaning as in the formulation of \cite[Theorem 5.10, p. 32]{x263}.
This result implies that $B(j)$ generates an exponentially equicontinuous $C_{r,l,j}$-regularized semigroup $(T_{r,l,j}(t))_{t\geq 0}$ on $E_{l}$ such that there exists a finite real constant $M_{j}\geq1$ such that, for every $\alpha\in {\mathbb N}_{0,l},$ we have $q_{\alpha}(T_{r,l,j}(t)f)\leq  M_{j}(1+t)^{r}e^{\delta t}q_{\alpha}(f),$ $t\geq 0 ,$ $f\in E_{l}$, $1\leq j\leq \omega$. If
$C= C_{r,l,1}\cdot ... \cdot C_{r,l,\omega}$, then there exists a finite real number $M>0$ such that $B(j)$ generates an exponentially equicontinuous $C$-regularized semigroup $(T_{j}(t))_{t\geq 0}$ on $E_{l}$ such that, for every $\alpha\in {\mathbb N}_{0,l},$ we have $q_{\alpha}(T_{j}(t)f)\leq  M(1+t)^{r}e^{\delta t}q_{\alpha}(f),$ $t\geq 0 ,$ $f\in E_{l}$, $1\leq j\leq \omega$. Summa summarum, there exist finite real numbers $\lambda \in (0,1)$ and $d_{1}>0, \ ...,\ d_{\omega}>0$ such that the operators $A(1):=d_{1}B(1),\ ...,\ A(\omega)=d_{\omega}B(\omega)$ satisfy 
$[A(j)]^{-1}C\in L(E_{l})$ and $q_{\alpha}([A(j)]^{-1}Cf)\leq \lambda q_{\alpha}(f),$ $f\in E_{l}$, $1\leq j\leq \omega$, $\alpha\in {\mathbb N}_{0,l}$. After that, we extend the sequence $A(\cdot)$ periodically to the whole integer line so that $A(k+\omega)=A(k)$, $k\in {\mathbb Z}.$  If $c\in {\mathbb C}\setminus \{0\},$ the sequence $k\mapsto f(k),$ $k\in {\mathbb Z}$ is $(\omega,c)$-periodic,
then an $(\omega,c)$-periodic solution $x(\cdot)$ of \eqref{zajeb}, with ${\mathcal A}(k)=A(k)$ for all $k\in {\mathbb Z}$, is given by \eqref{milevai}. The uniqueness of $(\omega,c)$-periodic solutions of \eqref{zajeb} holds if $|c|\geq 1.$

Here we can also use the projective limit of Fr\' echet spaces $Y:=\bigcap_{1\leq p<\infty}E_{l}^{p}$, which is a complete locally convex space due to \cite[Proposition 24.4]{meise}; it is also worth recalling that the resolvent set of all operators $A(k)$, $k\in {\mathbb Z}$ can be empty if the polynomials $P_{1}(\cdot),\ ...,\ P_{\omega}(\cdot)$ are not coercive (cf. \cite{abas-vlade} for more details).
\end{example}

Consider now the abstract difference equation
\begin{align}\label{vb}
CB(k+1)u(k+1)=A(k)u(k)+Cf(k),\quad k\in {\mathbb Z}
\end{align}
and  the abstract difference equation
\begin{align}\label{vb1}
B(k+1)Cu(k+1)=A(k)u(k)+Cg(k),\quad k\in {\mathbb Z},
\end{align}
where $C\in L(Y)$, $A(k)$ and $B(k)$ are linear operators on $Y$ ($k\in {\mathbb Z}$), $f: {\mathbb Z} \rightarrow Y$ and $g: {\mathbb Z} \rightarrow Y$ are given sequences. In order to analyze the equation \eqref{vb}, we introduce the substitution $v(k):=B(k)u(k),$ $k\in {\mathbb Z}.$ Then it can be simply shown that the equation \eqref{vb} is equivalent with the inclusion
\begin{align}\label{vbm}
Cv(k+1)\in \Bigl[A(k)\bigl[ B(k)\bigr]^{-1}\Bigr]v(k)+Cf(k),\quad k\in {\mathbb Z}.
\end{align}
On the other hand, it is clear that \eqref{vb1} is equivalent with the inclusion
\begin{align}\label{vb1m}
Cu(k+1)\in \Bigl[\bigl[ B(k+1)\bigr]^{-1}A(k)\Bigr]u(k)+Cf(k),\quad k\in {\mathbb Z},
\end{align}
where $Cf(k)=[ B(k+1)]^{-1}Cg(k)$ for all $k\in {\mathbb Z}.$
Applying Theorem \ref{zajeb-mlo} to \eqref{vbm} and \eqref{vb1m}, we immediately get the following results (we can similarly analyze the existence and uniqueness of Weyl and Besicovitch almost periodic type solutions to \eqref{vb} and \eqref{vb1}; Remark \ref{ghj} can be also simply rephrased, which will be important in Example \ref{karin} below):

\begin{thm}\label{zajeb-mlo1}
Suppose that $C\in L(Y)$, $f: {\mathbb Z} \rightarrow Y$ and for each integer $k\in {\mathbb Z}$ we have that $A(k)$ and $B(k)$ are linear operators on $Y$ such that $B(k)[A(k)]^{-1}C\in L(Y).$
Suppose, further, that
for each $k\in {\mathbb Z}$ and $\kappa \in \circledast_{Y}$ there exists a finite real number $c^{\kappa}(k)>0$ such that $\kappa(B(k)[A(k)]^{-1}Cx)\leq c^{\kappa}(k)\kappa(x)$ for all $x\in Y$, \eqref{rac} holds,
and for each $\kappa \in \circledast_{Y}$ we have $\sup_{k\in {\mathbb Z}}c^{\kappa}(k)<+\infty .$ 
Then we have the following:
\begin{itemize}
\item[(i)] If the sequence $f : {\mathbb Z} \rightarrow Y$ is bounded, then the sequence $v(\cdot),$ given by
\begin{align}
\notag v(k):&=-B(k)\bigl[A(k)\bigr]^{-1}Cf(k)
\\\label{milevaim} &-\sum_{j=-\infty}^{k-2}\Biggl[ \Bigl[ B(k)\bigl[A(k)\bigr]^{-1}C\Bigr]\cdot ...\cdot \Bigl[ B(-j-1)\bigl[A(-j-1)\bigr]^{-1}C\Bigr]\Biggr] f(-j-1),
\end{align}
for any $k\in {\mathbb Z},$
is well-defined and $v(\cdot)$ is a solution of \eqref{vbm}. Furthermore, the sequence $B(\cdot)v(\cdot)$ is a solution of \eqref{vb}.
\item[(ii)] If the sequence $k\mapsto B(k)[A(k)]^{-1}Cf(k),$ $k\in {\mathbb Z}$ is Bohr almost periodic and, for every relatively compact set $B\subseteq Y,$ $\kappa \in \circledast_{Y}$ and $\epsilon>0,$  there exists a finite real number
$L>0$ such that for each $t_{0}\in {\mathbb Z}$ there exists $\tau \in B(t_{0},L)\cap {\mathbb Z}$ such that
$$
\kappa \Bigl(  B(k+\tau)\bigl[A(k+\tau)\bigr]^{-1}Cx-B(k)\bigl[A(k)\bigr]^{-1}Cx\Bigr)\leq \epsilon,\quad k\in {\mathbb Z},\ x\in B,
$$
then a unique almost periodic solution $v(\cdot)$ of \eqref{vbm} is given by \eqref{milevaim}.  Furthermore, the sequences $A(\cdot)v(\cdot)$ and $B(\cdot)v(\cdot)$ are Bohr almost periodic and there exists a unique Bohr almost periodic solution of \eqref{vb}.
\item[(iii)] If $\omega \in {\mathbb N},$ $c\in {\mathbb C}\setminus \{0\},$ the sequence $k\mapsto f(k),$ $k\in {\mathbb Z}$ is $(\omega,c)$-periodic, $A(k+\omega)=A(k)$ and $B(k+\omega)=B(k)$ for all $k\in {\mathbb Z},$ 
then an $(\omega,c)$-periodic solution $v(\cdot)$ of \eqref{vbm} is given by \eqref{milevaim} and an $(\omega,c)$-periodic solution of \eqref{vb} is given by $B(\cdot)v(\cdot)$. The uniqueness of  $(\omega,c)$-periodic solutions to \eqref{vbm} and \eqref{vb} holds if $|c|\geq 1.$
\end{itemize}
\end{thm}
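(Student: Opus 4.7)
The plan is to reduce Theorem \ref{zajeb-mlo1} directly to Theorem \ref{zajeb-mlo} by introducing the MLO $\mathcal{A}(k) := A(k)[B(k)]^{-1}$ for $k\in\mathbb{Z}$ and performing the substitution $v(k) := B(k)u(k)$, exactly as indicated in the discussion preceding the theorem. Under this substitution, \eqref{vb} becomes
$$
Cv(k+1) \in \mathcal{A}(k)v(k) + Cf(k), \quad k \in \mathbb{Z},
$$
which is precisely the inclusion \eqref{vbm}. Using the standard calculus of MLOs, $[\mathcal{A}(k)]^{-1} = B(k)[A(k)]^{-1}$, so that $[\mathcal{A}(k)]^{-1}C = B(k)[A(k)]^{-1}C \in L(Y)$ with the seminorm estimate $\kappa([\mathcal{A}(k)]^{-1}Cx) \leq c^{\kappa}(k)\kappa(x)$, and the summability condition \eqref{rac} is assumed. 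Hence every hypothesis of Theorem \ref{zajeb-mlo} is in force.

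Applying Theorem \ref{zajeb-mlo}(i) then produces the sequence $v(\cdot)$ given by \eqref{milevai} which, after substituting $[\mathcal{A}(\cdot)]^{-1}C = B(\cdot)[A(\cdot)]^{-1}C$, is exactly the formula \eqref{milevaim}. To pass from $v(\cdot)$ back to $u(\cdot)$, choose any selection with $B(k)u(k) = v(k)$; then $A(k)u(k)\in \mathcal{A}(k)v(k)$, and \eqref{vbm} gives $CB(k+1)u(k+1) = Cv(k+1) = A(k)u(k)+Cf(k)$, i.e., \eqref{vb}, which yields the ``furthermore'' assertion in (i). Parts (ii) and (iii) are obtained the same way by invoking the corresponding clauses of Theorem \ref{zajeb-mlo}, noting that the assumption that $k\mapsto B(k)[A(k)]^{-1}Cf(k)$ is Bohr almost periodic (resp.\ the $(\omega,c)$-periodicity of $f$ and the periodicity of $A$ and $B$) translates to the hypotheses on $[\mathcal{A}(\cdot)]^{-1}Cf(\cdot)$ and $\mathcal{A}(\cdot)$ required there.

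The main obstacle is the remaining portion of (ii) and (iii): showing that $A(\cdot)v(\cdot)$ and $B(\cdot)v(\cdot)$ are themselves Bohr almost periodic (respectively $(\omega,c)$-periodic), and that \eqref{vb}, not just the reduced inclusion \eqref{vbm}, has a unique Bohr almost periodic (resp.\ $(\omega,c)$-periodic) solution for $|c|\geq 1$. For almost periodicity of $A(\cdot)v(\cdot)$, the trick is to read it off from the equation itself via the distinguished selection
$$
A(k)u(k) = Cv(k+1)-Cf(k),
$$
which is a linear combination of Bohr almost periodic sequences, hence Bohr almost periodic by Lemma \ref{bochner1233210}(i); an analogous argument using $B(k)u(k)=v(k)$ and the continuity of the operator families handles $B(\cdot)v(\cdot)$. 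For uniqueness in \eqref{vb}, one notes that any two Bohr almost periodic (or $(\omega,c)$-periodic) solutions $u_{1}(\cdot),u_{2}(\cdot)$ of \eqref{vb} produce, upon setting $v_{i}(k):=B(k)u_{i}(k)$, two solutions of \eqref{vbm} in the same class; the uniqueness conclusion of Theorem \ref{zajeb-mlo}(ii)--(iii) forces $v_{1}\equiv v_{2}$, after which \eqref{vb} and the injectivity arguments used in the proof of Theorem \ref{zajeb-e}(ii)--(iii) (exploiting \eqref{rekaoqq} together with $\sup_{k}c^{\kappa}(k)<+\infty$) propagate this to $u_{1}\equiv u_{2}$.
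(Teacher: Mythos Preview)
Your approach is essentially identical to the paper's: the paper derives Theorem \ref{zajeb-mlo1} by ``applying Theorem \ref{zajeb-mlo} to \eqref{vbm}'' after the substitution $v(k)=B(k)u(k)$, and offers no further argument. You carry out exactly this reduction and, in addition, sketch how the ``furthermore'' assertions (almost periodicity of $A(\cdot)v(\cdot)$ and $B(\cdot)v(\cdot)$, and uniqueness for \eqref{vb}) should follow, details that the paper leaves entirely implicit.
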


\begin{thm}\label{zajeb-mlo2}
Suppose that $C\in L(Y)$, $f: {\mathbb Z} \rightarrow Y,$ $g: {\mathbb Z} \rightarrow Y$ and for each integer $k\in {\mathbb Z}$ we have that $A(k)$ and $B(k)$ are linear operators on $Y$ such that $[A(k)]^{-1}B(k+1)C\in L(Y)$  and $[B(k+1)]^{-1}Cg(k)=Cf(k)$.
Suppose, further, that
for each $k\in {\mathbb Z}$ and $\kappa \in \circledast_{Y}$ there exists a finite real number $c^{\kappa}(k)>0$ such that $\kappa([A(k)]^{-1}B(k+1)Cx)\leq c^{\kappa}(k)\kappa(x)$ for all $x\in Y$, \eqref{rac} holds,
and for each $\kappa \in \circledast_{Y}$ we have $\sup_{k\in {\mathbb Z}}c^{\kappa}(k)<+\infty .$ 
Then we have the following:
\begin{itemize}
\item[(i)] If the sequence $f : {\mathbb Z} \rightarrow Y$ is bounded, then the sequence $x(\cdot),$ given by
\begin{align}
\notag u(k):&=-\bigl[A(k)\bigr]^{-1}B(k+1)Cf(k)
\\& \label{milevaid}-\sum_{j=-\infty}^{k-2}\Biggl[ \Bigl[\bigl[A(k)\bigr]^{-1}B(k+1)C\Bigr]\cdot ...\cdot  \Bigl[\bigl[A(-j-1)\bigr]^{-1}B(-j)C\Bigr]\Biggr] f(-j-1),
\end{align}
for any $k\in {\mathbb Z},$
is well-defined and $u(\cdot)$ is a solution of \eqref{vb1}.
\item[(ii)] If the sequence $k\mapsto [A(k)]^{-1}B(k+1)Cf(k),$ $k\in {\mathbb Z}$ is Bohr almost periodic and, for every relatively compact set $B\subseteq Y,$ $\kappa \in \circledast_{Y}$ and $\epsilon>0,$  there exists a finite real number
$L>0$ such that for each $t_{0}\in {\mathbb Z}$ there exists $\tau \in B(t_{0},L)\cap {\mathbb Z}$ such that
$$
\kappa \Bigl(  \bigl[A(k+\tau)\bigr]^{-1}B(k+1+\tau)Cx- \bigl[A(k)\bigr]^{-1}B(k+1)Cx\Bigr)\leq \epsilon,\quad k\in {\mathbb Z},\ x\in B,
$$
then a unique almost periodic solution $u(\cdot)$ of \eqref{vb1} is given by \eqref{milevaid}.
\item[(iii)] If $\omega \in {\mathbb N},$ $c\in {\mathbb C}\setminus \{0\},$ the sequence $k\mapsto f(k),$ $k\in {\mathbb Z}$ is $(\omega,c)$-periodic, $A(k+\omega)=A(k)$ and $B(k+\omega)=B(k)$ for all $k\in {\mathbb Z},$ 
then an $(\omega,c)$-periodic solution $u(\cdot)$ of \eqref{vb1} is given by \eqref{milevaid}. The uniqueness of  $(\omega,c)$-periodic solutions of \eqref{vb1} holds if $|c|\geq 1.$
\end{itemize}
\end{thm}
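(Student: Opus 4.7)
The plan is to reduce \eqref{vb1} to an instance of the inclusion \eqref{zajeb} already handled in Theorem \ref{zajeb-mlo}, by introducing the MLO
\begin{align*}
{\mathcal A}(k):=\bigl[B(k+1)\bigr]^{-1}A(k),\quad k\in {\mathbb Z},
\end{align*}
and using the compatibility assumption $[B(k+1)]^{-1}Cg(k)=Cf(k)$ to recast the inhomogeneity in the desired form. As indicated in the text preceding the statement, \eqref{vb1} is equivalent to the inclusion \eqref{vb1m}, which, under this compatibility identity and the definition of ${\mathcal A}(\cdot)$, is precisely
\begin{align*}
Cu(k+1)\in {\mathcal A}(k)u(k)+Cf(k),\quad k\in {\mathbb Z},
\end{align*}
that is, \eqref{zajeb} with the MLO ${\mathcal A}(\cdot)$ just defined.

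Next, I would verify that all hypotheses of Theorem \ref{zajeb-mlo} translate verbatim. From the MLO identity $[{\mathcal A}(k)]^{-1}C=[A(k)]^{-1}B(k+1)C$, the assumption $[A(k)]^{-1}B(k+1)C\in L(Y)$ gives $[{\mathcal A}(k)]^{-1}C\in L(Y)$, and the seminorm estimate on $[A(k)]^{-1}B(k+1)C$ gives the seminorm estimate on $[{\mathcal A}(k)]^{-1}C$ with the same $c^{\kappa}(k)$; condition \eqref{rac} and $\sup_{k\in {\mathbb Z}}c^{\kappa}(k)<+\infty$ are hypothesized. Hence the abstract result of Theorem \ref{zajeb-mlo} applies to the transformed equation, and its formula \eqref{milevai} specialises, upon substituting $[{\mathcal A}(k)]^{-1}C\mapsto [A(k)]^{-1}B(k+1)C$ in every factor of the telescoping product, to exactly the expression \eqref{milevaid}.

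With this dictionary in place, parts (i)--(iii) are direct corollaries: (i) follows from Theorem \ref{zajeb-mlo}(i) once one identifies the two series term by term; (ii) follows from Theorem \ref{zajeb-mlo}(ii), since the Bohr almost periodicity of $k\mapsto [A(k)]^{-1}B(k+1)Cf(k)$ is exactly the hypothesis required there on $k\mapsto [{\mathcal A}(k)]^{-1}Cf(k)$, and the relatively compact uniform estimate on $[A(k+\tau)]^{-1}B(k+1+\tau)Cx-[A(k)]^{-1}B(k+1)Cx$ matches the one on $[{\mathcal A}(k+\tau)]^{-1}Cx-[{\mathcal A}(k)]^{-1}Cx$; (iii) follows from Theorem \ref{zajeb-mlo}(iii), since joint $\omega$-periodicity of $A(\cdot)$ and $B(\cdot)$ yields $\omega$-periodicity of ${\mathcal A}(\cdot)$, and the $(\omega,c)$-periodicity of $f(\cdot)$ carries through the compatibility identity (or, equivalently, may be assumed directly on the transformed right-hand side).

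The only delicate point I expect is the bookkeeping for the MLO identity $[[B(k+1)]^{-1}A(k)]^{-1}C=[A(k)]^{-1}B(k+1)C$, which relies on the calculus of inverses of multivalued linear operators from \cite{FKP}, together with the fact that $C$ need not be injective so that the inclusion \eqref{zajeb} is genuinely an inclusion rather than an equation; once this identification is made, the proof is essentially a translation exercise and requires no new analytic input beyond Theorem \ref{zajeb-mlo}.
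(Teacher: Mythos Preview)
Your proposal is correct and follows exactly the route the paper takes: in the text immediately preceding the statement, the author rewrites \eqref{vb1} as the inclusion \eqref{vb1m} with ${\mathcal A}(k)=[B(k+1)]^{-1}A(k)$ and then says that applying Theorem \ref{zajeb-mlo} ``immediately'' yields the result. Your identification $[{\mathcal A}(k)]^{-1}C=[A(k)]^{-1}B(k+1)C$ via the MLO inversion rule and the subsequent term-by-term translation of hypotheses and of formula \eqref{milevai} into \eqref{milevaid} is precisely the intended argument.
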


Concerning the condition $B(k)[A(k)]^{-1}C\in L(Y)$ clarified in the formulation of Theorem \ref{zajeb-mlo1}, we have the following observation:

\begin{rem}\label{zat}
Suppose that $C\in L(Y)$, $A(k)$ is a linear operator on $Y$ and $B(k)$ is a closed linear operator on $Y$ ($k\in {\mathbb Z}$). Then $B(k)[A(k)]^{-1}C\in L(Y)$ provided that $[A(k)]^{-1}C\in L(Y)$  and
$D(A(k))\subseteq D(B(k))$ for all $k\in {\mathbb Z}$, as well as the space $Y$ is ultra-bornological and has a web; this follows from the fact that the operator $B(k)[A(k)]^{-1}C$ is closed and defined on the whole space $Y$ (cf. \cite[Closed graph theorem, p. 289]{meise}). 
\end{rem}

Now we will illustrate Theorem \ref{zajeb-mlo1} and Theorem \ref{zajeb-mlo2} with the following examples:

\begin{example}\label{rt}
Let $E$ be $L^p(\mathbb R^n)$ ($1\leq p\leq\infty)$, $C_0(\mathbb R^n)$, $C_b(\mathbb R^n)$ or $BUC(\mathbb R^n),$ and let $0\leq l\leq n;$ we will consider the Fr\' echet space $E_{l}$ from  Example \ref{eel}. Suppose that $\gamma \in (0,2),$ $\omega \geq 0$ and the operator $A$ generates an exponentially equicontinuous $(g_{\gamma},C)$-regularized $C$-resolvent family $(R_{\gamma}(t))_{t\geq 0}$ on $E_{l}$ such that there exists $M>0$ such that 
\begin{align}\label{tajp}
q_{\alpha}(R_{\gamma}(t)f)\leq Me^{\omega t}q_{\alpha}(f),\quad f\in E_{l},\ t\geq 0,\ \alpha \in {\mathbb N}_{0,l};
\end{align} 
cf. \cite[Theorem 2.2.21, Remark 2.2.23]{FKP} with $P_{2}(x)\equiv 1$ for some examples of the abstract differential operators $A$ with constant coefficients which obey the above properties. Let $\sigma>0,$ let a sequence $b: {\mathbb Z} \rightarrow (\omega^{\gamma}+\sigma,+\infty)$ be Bohr almost periodic and let $m : {\mathbb Z} \rightarrow {\mathbb C} $ be a Bohr almost periodic  sequence.
 Let us consider the operators $B_{0}(k):=[m(k)]{\rm I},$ $k\in {\mathbb Z}$ and $A(k):=b(k)-A$ for all $k\in {\mathbb Z}$. Then $B_{0}(k)\in L(E_{l})$ and $[A(k)]^{-1}C\in L(E_{l})$ for all $k\in {\mathbb Z}.$
Furthermore, 
$$
\lambda^{\gamma-1}\bigl( \lambda^{\gamma}-A \bigr)^{-1}Cf=\int^{\infty}_{0}e^{-\lambda t}R_{\gamma}(t)f\, dt,\quad f\in E_{l},\ \Re \lambda>\omega,
$$
which simply implies that there exists a finite real constant $M_{1}>0$ such that $q_{\alpha}(B_{0}(k)[A(k)]^{-1}Cf)\leq M_{1}[\sup_{k\in {\mathbb Z}}|m(k)| ]\cdot q_{\alpha}(f):=M_{1}m_{\infty}\cdot q_{\alpha}(f)$ for all $k\in {\mathbb Z}$, $f\in E_{l}$ and $\alpha \in {\mathbb N}_{0,l}.$ 
Then for each $\alpha \in {\mathbb N}_{0,l}$ we have $q_{\alpha}(B(k)[A(k)]^{-1}Cf)\leq M_{0}q_{\alpha,1}(f),$ $f\in E_{l},$ $k\in {\mathbb Z},$ 
where $M_{0}\in (0,1)$ and $B(k):=M_{0}B_{0}(k)/(1+M_{1}m_{\infty})$ for all $k\in {\mathbb Z}.$

We will prove that the requirements of Theorem \ref{zajeb-mlo1} hold with the operator $C$ replaced therein with the operator $C^{2},$ provided that the sequence $k\mapsto f(k),$ $k\in {\mathbb Z}$ is Bohr almost periodic; we can similarly analyze the possible application of Theorem \ref{zajeb-mlo2}. Toward this end, it is only worth noting that the use of $C$-resolvent equation  (cf. \cite[Theorem 1.2.4(ii)]{FKP}) implies that  there exists a finite real constant $M_{2}>0$ such that for each multi-index $\alpha \in {\mathbb N}_{0,l}$and $f\in E_{l}$ we have
\begin{align*} &
q_{\alpha,1}\Bigl(  B(k+\tau)\bigl[A(k+\tau) \bigr]^{-1}C^{2}f-B(k)\bigl[A(k) \bigr]^{-1}C^{2}f\Bigr) 
\\& \leq \bigl| m(k+\tau)-m(k)\bigr| q_{\alpha}\Bigl( \bigl(b(k+\tau)-A \bigr)^{1}C^{2}f \Bigr)
\\& +|m(k)|\cdot q_{\alpha}\Bigl(\bigl(b(k+\tau)-A \bigr)^{1}C^{2}f-\bigl(b(k)-A \bigr)^{1}C^{2}f\Bigr)
\\& \leq \bigl| m(k+\tau)-m(k)\bigr| q_{\alpha}\Bigl( \bigl(b(k+\tau)-A \bigr)^{1}C^{2}f \Bigr)
\\&+\bigl| b(k+\tau)-b(k)\bigr| q_{\alpha}\Bigl( \bigl(b(k+\tau)-A \bigr)^{1}C \cdot \bigl(b(k)-A \bigr)^{1}Cf \Bigr)
\\& \leq M_{2}\Bigl[\bigl| m(k+\tau)-m(k)\bigr| q_{\alpha}(f)+ \bigl| b(k+\tau)-b(k)\bigr| q_{\alpha}(f)\Bigr],\quad k\in {\mathbb Z},\ \tau \in {\mathbb Z}.
\end{align*}
In particular, there exists a unique Bohr almost periodic solution of the abstract difference equation \eqref{vb} with the operator $C$ replaced therein with the operator $C^{2}.$
Here we can also use the projective limit of Fr\' echet spaces $Y:=\bigcap_{1\leq p<\infty}E_{l}^{p}$, as in Example \ref{eel}.
\end{example}

\begin{example}\label{dl}
Assume that the set $\emptyset \neq \Omega \subseteq {\mathbb R}^{n}$ is open, bounded and Dirichlet regular. Then we know that the Dirichlet Laplacian $\Delta_{0}$ generates a positive contractive strongly continuous semigroup on the space $C_{0}(\Omega):=\{u\in C(\overline{\Omega}) : u_{| \partial \Omega}=0\},$ equipped with the sup-norm; cf. \cite[Chapter 6]{a43} for the notion and more details. Using Theorem \ref{zajeb-mlo1}, we can consider the existence and uniqueness of almost periodic solutions to 
the following abstract nonautonomous semi-discrete Poisson heat equation in the space $C_{0}(\Omega)$:
\[(P):\left\{
\begin{array}{l}
m(k+1,x)u(k+1,x)=\Delta_{0} u(k,x) -b(k)u(k,x)+f(k,x),\ k\in {\mathbb Z},\ x\in {\Omega};\\
u(k,x)=0,\quad (k,x)\in {\mathbb Z} \times \partial \Omega ,
\end{array}
\right.
\]
where the sequence $m : {\mathbb Z}\rightarrow C_{0}(\Omega)$ is almost periodic, the sequence $f : {\mathbb Z}\rightarrow C_{0}(\Omega)$ is almost periodic, the sequence $b : {\mathbb Z}\rightarrow {\mathbb C}$ is almost periodic, the value of term $\sup_{k\in {\mathbb Z};x\in \overline{\Omega}}|m(k,x)|$ is sufficiently small and there exists $c>0$ such that $\Re (b(k))>c$ for all $k\in {\mathbb Z}.$ Using Theorem \ref{zajeb-mlo1} with the operators $C=I$, $[B(k)f](x):=m(k,x)f(x),$ $k\in {\mathbb Z},$ $x\in \overline{\Omega},$ $f\in C_{0}(\Omega)$ and $A(k):=\Delta_{0}-b(k)$, $k\in {\mathbb Z},$ the computation carried out in the final part of Example \ref{rt} with the operator $A=\Delta_{0}$ shows that there exists a unique almost periodic solution $u : {\mathbb Z}\rightarrow C_{0}(\Omega)$ of problem (P). Furthermore, the sequences $k\mapsto m(k,\cdot)u(k,\cdot) \in C_{0}(\Omega),$ $k\in {\mathbb Z}$ and $k\mapsto \Delta_{0} u(k,\cdot) -b(k)u(k,\cdot) \in C_{0}(\Omega),$ $k\in {\mathbb Z}$ are almost periodic.

Let us finally note that we can use here the operators $B(k):=c(k)\Delta_{0}+d(k),$ $k\in {\mathbb Z}$, where $c(\cdot)$ and $d(\cdot)$ are almost periodic scalar-valued sequences  with sufficiently small sup-norms (cf. also Remark \ref{zat}), as well as that in place of the space $C_{0}(\Omega)$ and the operator $\Delta_{0}$ we can consider an arbitrary locally convex space $Y$ and an arbitrary subgenerator of an exponentially equicontionuous $(g_{\gamma},C)$-regularized solution operator family $(R_{\gamma}(t))_{t\geq 0}$ such that an estimate of type \eqref{tajp} is satisfied.
\end{example}

\section{Abstract nonautonomous higher-order difference equations}\label{s2nn}

In this section, we discuss the existence and uniqueness of almost periodic type solutions for certain classes of the abstract nonautonomous higher-order difference equations. First of all, we will briefly explain how Theorem \ref{zajeb-mlo1} and Theorem \ref{zajeb-mlo2} can be successfully applied in the analysis of the existence and uniqueness of systems of the abstract nonautonomous first-order difference equations (for further information concerning the operator matrices and their applications, we refer the reader to the book manuscript \cite{engel} by K. J. Engel). Let $p\in {\mathbb N}\setminus \{1\},$ and let us consider the following equations
\begin{align*}
{\mathrm C}{\mathrm B}(k+1)\vec{u}(k+1)={\mathrm A}(k)\vec{u}(k)+{\mathrm C}\vec{f}(k),\quad k\in {\mathbb Z}
\end{align*}
and
\begin{align}\label{karin1}
{\mathrm B}(k+1){\mathrm C}\vec{u}(k+1)={\mathrm A}(k)\vec{u}(k)+{\mathrm C}\vec{g}(k),\quad k\in {\mathbb Z},
\end{align}
where $\vec{f}(k) : {\mathbb Z} \rightarrow Y^{p}$ and $\vec{g}(k) : {\mathbb Z} \rightarrow Y^{p}$ are given sequences, $\vec{u}(k) : {\mathbb Z} \rightarrow Y^{p}$ is an unknown sequence,
$$
{\mathrm A}(k):=\left[ \begin{array}{ccccc}
A_{11}(k) & A_{12}(k)    & \cdot \cdot \cdot & A_{1p}(k) \\
A_{21}(k) & A_{22}(k)    & \cdot \cdot \cdot & A_{2p}(k)  \\
\cdot & \cdot  & \cdot \cdot \cdot & \cdot \\
A_{p-1;1}(k) & A_{p-1;2}(k)    & \cdot \cdot \cdot & A_{p-1;p}(k)  \\
A_{p1}(k) & A_{p2}(k)    & \cdot \cdot \cdot & A_{pp}(k)  \\  \end{array} \right],\quad k\in {\mathbb Z},
$$
$$
{\mathrm B}(k):=\left[ \begin{array}{ccccc}
B_{11}(k) & B_{12}(k)    & \cdot \cdot \cdot & B_{1p}(k) \\
B_{21}(k) & B_{22}(k)   & \cdot \cdot \cdot & B_{2p}(k)  \\
\cdot & \cdot  & \cdot \cdot \cdot & \cdot \\
B_{p-1;1}(k) & B_{p-1;2}(k)   & \cdot \cdot \cdot & B_{p-1;p}(k)  \\
B{p1}(k) & B_{p2}(k)    & \cdot \cdot \cdot & B_{pp}(k)  \\  \end{array} \right],\quad k\in {\mathbb Z},
$$
and 
$$
{\mathrm C}:=\left[ \begin{array}{ccccc}
C_{11} & C_{12}    & \cdot \cdot \cdot & C_{1p} \\
C_{21} & C_{22}    & \cdot \cdot \cdot & C_{2p}  \\
\cdot & \cdot  & \cdot \cdot \cdot & \cdot \\
C_{p-1;1} & C_{p-1;2}    & \cdot \cdot \cdot & C_{p-1;p} \\
C_{p1} & C_{p2}  & \cdot \cdot \cdot & C_{pp} \\  \end{array} \right],
$$
where $A_{ij}(k)$ and $B_{ij}(k)$ are linear operators on $Y$ and $C_{ij}\in L(Y)$ for $1\leq i,j\leq p$ and $k\in {\mathbb Z}.$ Then it is clear that Theorem \ref{zajeb-mlo1}, resp. Theorem \ref{zajeb-mlo2}, can be applied if the operator matrix $[{\mathrm A}(k)]^{-1}{\mathrm B}(k+1){\mathrm C},$ resp. ${\mathrm B}(k)[{\mathrm A}(k)]^{-1}{\mathrm C}$, belongs to $L(Y^{p})$ and all other requirements from the formulations of the above-mentioned results hold. We will illustrate the above consideration with the following concrete example: 

\begin{example}\label{karin}
Suppose that $(Y,\| \cdot \|)$ is a complex Banach space, ${\mathrm C}$ is the identity matrix on $Y^{p}$, $\vec{f}(k)=[f_{1}(k)\ ...\ f_{p}(k)]^{T}$, $k\in {\mathbb Z}$, $\vec{u}(k)=[u_{1}(k)\ ...\ u_{p}(k)]^{T}$, $k\in {\mathbb Z}$ and 
$$
g_{i}(k)=B_{i1}(k+1)f_{1}(k)+...+B_{ip}(k+1)f_{p}(k),\quad k\in {\mathbb Z},\ 1\leq i\leq p.
$$
Then \eqref{karin1} is equivalent with the following system of abstract difference equations of first order:
\begin{align}\notag
B_{i1}(k+1) & u_{1}(k+1)+...+B_{ip}(k+1)u_{p}(k+1)
\\\label{karin2}& =A_{i1}(k)u(k)+...+A_{ip}(k)u_{p}(k)+g_{i}(k),\quad k\in {\mathbb Z},\ 1\leq i\leq p.
\end{align}
Suppose that the following conditions hold:
\begin{itemize}
\item[(i)] The sequence $\vec{f}: {\mathbb Z} \rightarrow Y^{p}$ is almost periodic;
\item[(ii)] There exist operators $D_{ij}(k)\in L(Y)$ ($1\leq i,\ j\leq p;$ $k\in {\mathbb Z}$) such that the sequence $k\mapsto D_{ij}(k)\in L(Y)$, $k\in {\mathbb Z}$ is almost periodic ($1\leq i,\ j\leq p$), $\sum_{1\leq i,j\leq p}\| D_{ij}(k)\| \leq 1/(2p^{2})$ for all $k\in {\mathbb Z}$ and 
\begin{align}\label{bm}
B_{ij}(k+1)=A_{i1}(k)D_{1j}(k)+...+A_{ip}(k)D_{pj}(k),\quad k\in {\mathbb Z} \ \  (1\leq i,\ j\leq p).
\end{align} 
\end{itemize}
Then \eqref{bm} implies ${\mathrm B}(k+1)={\mathrm A}(k){\mathrm D}(k),$ $k\in {\mathbb Z},$ where ${\mathrm D}(k):=[D_{ij}(k)]_{1\leq i,j\leq p},$ and there exists an almost periodic solution of the system \eqref{karin2}; the uniqueness of almost periodic solutions holds if we additionally assume that the operators $[{\mathrm A}(k)]^{-1}$ are single-valued for all $k\in {\mathbb Z},$ when we have ${\mathrm D}(k)=[{\mathrm A}(k)]^{-1}{\mathrm B}(k+1),$ $k\in {\mathbb Z}$.
Finally, we would like to emphasize that our method is really practical since for any choice of almost periodic matrices of bounded linear operators ${\mathrm D}(\cdot)$
and for any choice of matrices of linear operators ${\mathrm A}(\cdot)$ we can directly compute the matrices ${\mathrm B}(\cdot)$ and clarify, after that, the type of system \eqref{karin2} which is solved. 

For example, if the set $\emptyset \neq \Omega \subseteq {\mathbb R}^{n}$ is open, bounded and Dirichlet regular, then the Dirichlet Laplacian $\Delta_{0}$
 generates a positive contractive strongly continuous semigroup on the space $Y=C_{0}(\Omega)$ so that there exists a finite real constant $M>0$ such that $\| (\lambda-\Delta_{0})^{-1} \| \leq M/(\Re \lambda),$ $\Re \lambda>0.$ Suppose now that for each $1\leq i,\ j\leq p$ the sequence $(b_{ij}(k))_{k\in {\mathbb Z}}$ is almost periodic and  
$$
\sum_{1\leq i,j\leq p}\Re (b_{ij}(k))\geq 2p^{2}M,\quad k\in {\mathbb Z}.
$$ 
Then Theorem \ref{zajeb-mlo2} is applicable with the operators $D_{ij}(k)=(b_{ij}(k)-\Delta_{0})^{-1}$,  $A_{ij}(k)$ being certain polynomials of the operator $\Delta_{0}$ and $B_{ij}(k)$ are given through \eqref{bm}, for $1\leq i,\ j\leq p$ and $k\in {\mathbb Z}$.
\end{example}

Let us consider now the following abstract nonautonomous higher-order difference equation:
\begin{align}
\notag CA_{p}(k&+p) u(k+p)+CA_{p-1}(k+p-1)u(k+p-1)+...
\\& \label{hi}+CA_{1}(k+1)u(k+1)+A_{0}u(k)=Cf(k),\quad k\in {\mathbb Z},
\end{align}
where $p\in {\mathbb N}\setminus \{1\},$ $f: {\mathbb Z} \rightarrow Y$, $C\in L(Y)$ and $A_{p}(k),...,A_{0}(k)$ are linear operators on $Y$ for all $k\in {\mathbb Z}.$ Set $\vec{u}(k):=[u(k)\ u(k+1)\, ... \, u(k+p-1)\bigr]^{T}$ and $\vec{f}(k):=[f(k)\ 0\, ... \, 0]^{T}$, $k\in {\mathbb Z}.$ Then we can rewrite \eqref{hi} in the following equivalent form:
\begin{align}\label{qaz}
{\mathbf C}{\mathbf B}(k+1)\vec{u}(k+1)={\mathbf A}(k)\vec{u}(k)+{\mathbf C}\vec{f}(k),\quad k\in {\mathbb Z},
\end{align}
where 
$$
{\mathbf A}(k):=\left[ \begin{array}{ccccc}
-A_{0}(k) & 0 & 0  & \cdot \cdot \cdot & 0 \\
0 & C  &  & \cdot \cdot \cdot & 0  \\
\cdot & \cdot &  \cdot & \cdot \cdot \cdot & \cdot \\
0 & 0 & 0 &  \cdot \cdot C &   0 \\
0 & 0 & 0 & \cdot \cdot \cdot & C  \end{array} \right],\quad k\in {\mathbb Z},
$$
$$
{\mathbf B}(k+1):=
\left[ \begin{array}{ccccc}
A_{1}(k+1) & A_{2}(k+2) & A_{3}(k+3)  & \cdot \cdot \cdot & A_{p}(k+p) \\
{\rm I} & 0 &  0 & \cdot \cdot \cdot & 0  \\
0 & {\rm I} &  \cdot & \cdot \cdot \cdot & 0 \\
0 & 0 & \cdot \cdot \cdot &  {\rm I} \ \ 0  & 0 \\
0 & 0 & 0 & \cdot \cdot {\rm I} & 0  \end{array} \right], \quad k\in {\mathbb Z}
$$
and ${\mathbf C}:=CI_{Y^{p}},$ where $I_{Y^{p}}$ denotes the identity operator on $Y^{p}.$ Direct calculations show that ($A_{2,1}=[A_{0}(k)]^{-1}C$):
\begin{align*}
&{\mathbf B}(k+1)\bigl[{\mathbf A}(k)\bigr]^{-1}{\mathbf C}
\\& =-\left[ \begin{array}{ccccc}
A_{1}(k)[A_{0}(k)]^{-1}C & A_{2}(k+1) & A_{3}(k+2)  & \cdot \cdot \cdot & A_{p}(k+p-1) \\
A_{2,1} & 0 &  0 & \cdot \cdot \cdot & 0  \\
0 & {-\rm I} &  \cdot & \cdot \cdot \cdot & 0 \\
0 & 0 & \cdot \cdot \cdot & -{\rm I} \ \ 0  & 0 \\
0 & 0 & 0 & \cdot \cdot -{\rm I} & 0  \end{array} \right], 
\end{align*}
for any $k\in {\mathbb Z}$, so that Theorem \ref{zajeb-mlo1} cannot be applied if $p\geq 3$ because of violation of condition \eqref{rac}; a similar analysis shows that is very difficult to apply Theorem \ref{zajeb-mlo2} for any $p\geq 2$. 

In the case that $p=2,$ we can apply Theorem \ref{zajeb-mlo1} and, as an outcome, we obtain the following result:

\begin{thm}\label{zajeb-mlo1ho}
Suppose that $p=2,$ $C\in L(Y)$ is injective, $f: {\mathbb Z} \rightarrow Y$ and for each integers $k\in {\mathbb Z}$ and $j\in \{0,1,2\}$ we have that $A_{j}(k)$ is a linear operator on $Y$ such that $[A_{0}(k)]^{-1}C\in L(Y),$ $A_{1}(k)[A_{0}(k)]^{-1}C\in L(Y)$ and $A_{2}(k)\in L(Y).$
Suppose, further, that
for each $k\in {\mathbb Z}$ and $\kappa \in \circledast_{Y}$ there exist real numbers $c^{\kappa}(k,1)>0$, $c^{\kappa}(k,2)>0$ and $c^{\kappa}(k,3)>0$
such that $\kappa([A_{0}(k)]^{-1}Cx)\leq c^{\kappa}(k,1)\kappa(x)$, $\kappa(A_{1}(k)[A_{0}(k)]^{-1}Cx )\leq c^{\kappa}(k,2)\kappa(x)$ and $\kappa(A_{2}(k)x)\leq c^{\kappa}(k,3)\kappa(x)$ for all $x\in Y,$ $k\in {\mathbb Z}$ and $\kappa \in \circledast_{Y}.$
Set $c^{\kappa}(k):=c^{\kappa}(k,1)+c^{\kappa}(k,2)+c^{\kappa}(k,3)$ for all $\kappa \in \circledast_{Y}$ and $k\in {\mathbb Z}.$
Assume that \eqref{rac} holds
and for each $\kappa \in \circledast_{Y}$ we have $\sup_{k\in {\mathbb Z}}c^{\kappa}(k)<+\infty .$ 
Then we have the following:
\begin{itemize}
\item[(i)] If the sequence $f : {\mathbb Z} \rightarrow Y$ is bounded, then the sequence $\vec{v}(\cdot),$ given by
\begin{align}
\notag \vec{v}(k):&=-{\mathbf B}(k)\bigl[{\mathbf A}(k)\bigr]^{-1}{\mathbf C}\vec{f}(k)
\\\label{milevaimho} &-\sum_{j=-\infty}^{k-2}\Biggl[ \Bigl[ {\mathbf B}(k)\bigl[{\mathbf A}(k)\bigr]^{-1}{\mathbf C}\Bigr]\cdot ...\cdot \Bigl[ {\mathbf B}(-j-1)\bigl[{\mathbf A}(-j-1)\bigr]^{-1}{\mathbf C}\Bigr]\Biggr] \vec{f}(-j-1),
\end{align}
for any $k\in {\mathbb Z},$
is well-defined and $\vec{v}(\cdot)$ is a solution of
\begin{align}\label{vbmfg}
{\mathbf C}\vec{v}(k+1)\in \Bigl[{\mathbf A}(k)\bigl[ {\mathbf B}(k)\bigr]^{-1}\Bigr]\vec{v}(k)+{\mathbf C}\vec{f}(k),\quad k\in {\mathbb Z}.
\end{align}
Furthermore, the sequence ${\mathbf B}(\cdot)\vec{v}(\cdot)$ is a solution of \eqref{qaz} and its first projection is a solution of \eqref{hi}.
\item[(ii)] If the sequence $k\mapsto {\mathbf B}(k)[{\mathbf A}(k)]^{-1}{\mathbf C}\vec{f}(k),$ $k\in {\mathbb Z}$ is Bohr almost periodic and, for every relatively compact set $B\subseteq Y^{k},$ $\kappa \in \circledast_{Y^{k}}$ and $\epsilon>0,$  there exists a finite real number
$L>0$ such that for each $t_{0}\in {\mathbb Z}$ there exists $\tau \in B(t_{0},L)\cap {\mathbb Z}$ such that
$$
\kappa \Bigl( { \mathbf B}(k+\tau)\bigl[ { \mathbf A}(k+\tau)\bigr]^{-1} { \mathbf C}\vec{x}- { \mathbf B}(k)\bigl[ { \mathbf A}(k)\bigr]^{-1}C\vec{x}\Bigr)\leq \epsilon,\quad k\in {\mathbb Z},\ \vec{x}\in B,
$$
then a unique almost periodic solution $\vec{v}(\cdot)$ of \eqref{vbmfg} is given by \eqref{milevaimho}.  Furthermore, the sequences $ { \mathbf A}(\cdot)\vec{v}(\cdot)$ and $ { \mathbf B}(\cdot)\vec{v}(\cdot)$ are Bohr almost periodic and there exists a unique Bohr almost periodic solution of \eqref{hi}.
\item[(iii)] If $\omega \in {\mathbb N},$ $c\in {\mathbb C}\setminus \{0\},$ the sequence $k\mapsto f(k),$ $k\in {\mathbb Z}$ is $(\omega,c)$-periodic, $ A(j)(k+\omega)= A(j)(k)$ and $ B_{j}(k+\omega)=  B_{j}(k)$ for all $k\in {\mathbb Z}$ and $j=0,1,2,$ 
then an $(\omega,c)$-periodic solution $\vec{v}(\cdot)$ of \eqref{vbmfg} is given by \eqref{milevaimho} and an $(\omega,c)$-periodic solution of \eqref{qaz}, resp., \eqref{hi}, is given by $ { \mathbf B}(\cdot)\vec{v}(\cdot)$, resp. by its first projection. The uniqueness of  $(\omega,c)$-periodic solutions to \eqref{qaz} and \eqref{hi} holds if $|c|\geq 1.$
\end{itemize}
\end{thm}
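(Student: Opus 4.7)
The plan is to reduce the scalar second-order difference equation \eqref{hi} (with $p=2$) to the first-order \emph{matrix} difference equation \eqref{qaz} on $Y^{2}$ via the standard vectorization $\vec{u}(k)=[u(k)\ u(k+1)]^{T}$, $\vec{f}(k)=[f(k)\ 0]^{T}$, and then apply Theorem \ref{zajeb-mlo1} verbatim to the operator-matrix system, with $A(\cdot)$, $B(\cdot)$, $C$ replaced by $\mathbf{A}(\cdot)$, $\mathbf{B}(\cdot)$, $\mathbf{C}=CI_{Y^{2}}$.

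First I would check that \eqref{hi} and \eqref{qaz} are indeed equivalent under this vectorization. Expanding the first row of \eqref{qaz} recovers \eqref{hi}, while the second row collapses to $Cu(k+1)=Cu(k+1)$, a tautology. Conversely, given any solution $\vec{w}$ of \eqref{qaz}, the second row forces $Cw_{1}(k+1)=Cw_{2}(k)$, hence $w_{2}(k)=w_{1}(k+1)$ by the injectivity of $C$, so that the first projection $u(k):=w_{1}(k)$ satisfies \eqref{hi} and automatically $\vec{w}(k)=[u(k)\ u(k+1)]^{T}$. This is why the theorem extracts the first projection of $\mathbf{B}(\cdot)\vec{v}(\cdot)$.

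Second I would verify the hypotheses of Theorem \ref{zajeb-mlo1} for the matrix operators. A direct $2\times 2$ block computation using the block-diagonal form of $\mathbf{A}(k)$ (together with injectivity of $C$) gives
\begin{equation*}
\bigl[\mathbf{A}(k)\bigr]^{-1}\mathbf{C}=\begin{pmatrix}-[A_{0}(k)]^{-1}C & 0 \\ 0 & \mathrm{I}\end{pmatrix},\qquad \mathbf{B}(k)\bigl[\mathbf{A}(k)\bigr]^{-1}\mathbf{C}=\begin{pmatrix}-A_{1}(k)[A_{0}(k)]^{-1}C & A_{2}(k+1) \\ -[A_{0}(k)]^{-1}C & 0\end{pmatrix}.
\end{equation*}
Each entry lies in $L(Y)$ by the standing hypotheses, so $\mathbf{B}(k)[\mathbf{A}(k)]^{-1}\mathbf{C}\in L(Y^{2})$. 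Equipping $Y^{2}$ with the sum seminorm $\kappa(\vec{x})=\kappa(x_{1})+\kappa(x_{2})$, the entrywise bounds yield
\begin{equation*}
\kappa\bigl(\mathbf{B}(k)[\mathbf{A}(k)]^{-1}\mathbf{C}\vec{x}\bigr)\leq\bigl[c^{\kappa}(k,1)+c^{\kappa}(k,2)+c^{\kappa}(k+1,3)\bigr]\kappa(\vec{x})\leq\bigl[c^{\kappa}(k)+c^{\kappa}(k+1)\bigr]\kappa(\vec{x}),
\end{equation*}
so the matrix operator satisfies the seminorm hypothesis of Theorem \ref{zajeb-mlo1} with the modified constant $\tilde c^{\kappa}(k):=c^{\kappa}(k)+c^{\kappa}(k+1)$. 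Both the supremum condition and the summability condition \eqref{rac} transfer from $c^{\kappa}(\cdot)$ to $\tilde c^{\kappa}(\cdot)$ after a trivial index shift.

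Once these hypotheses are in hand, Theorem \ref{zajeb-mlo1}(i)--(iii) applied to \eqref{qaz} immediately yields $\vec{v}(\cdot)$ via the explicit series \eqref{milevaimho}, makes $\mathbf{B}(\cdot)\vec{v}(\cdot)$ a solution of \eqref{qaz}, and provides almost periodic, $(\omega,c)$-periodic, and uniqueness conclusions. Taking first projections then transfers these to \eqref{hi} by the equivalence established above. The almost periodicity hypothesis in part (ii) and the periodicity hypothesis in part (iii) translate entrywise into the already-stated scalar assumptions on $[A_{0}(k)]^{-1}C$, $A_{1}(k)[A_{0}(k)]^{-1}C$ and $A_{2}(k)$. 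The main obstacle I anticipate is purely bookkeeping: the shifted entry $A_{2}(k+1)$ inside $\mathbf{B}(k)$ forces $\tilde c^{\kappa}(k)$ to involve $c^{\kappa}(k+1,3)$ rather than $c^{\kappa}(k,3)$, and similarly one must check that the almost-periodic shift $\tau$ for the matrix coefficient can be chosen common to all three scalar coefficients (achieved by Lemma \ref{bochner1233210}(ii), applied to the triple of associated Bohr almost periodic sequences into $l_{\infty}(B:Y)$); since the scalar hypotheses are translation-stable, this causes no genuine difficulty.
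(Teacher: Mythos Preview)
Your approach is precisely the paper's: the paper offers no separate proof of Theorem \ref{zajeb-mlo1ho} at all, but simply states it after the sentence ``In the case that $p=2$, we can apply Theorem \ref{zajeb-mlo1} and, as an outcome, we obtain the following result.'' Your vectorization, the explicit block computation of $\mathbf{B}(k)[\mathbf{A}(k)]^{-1}\mathbf{C}$, and the appeal to Theorem \ref{zajeb-mlo1}(i)--(iii) fill in exactly the details the paper leaves implicit.

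One step, however, is not justified as written. You bound the matrix seminorm by $\tilde c^{\kappa}(k)=c^{\kappa}(k)+c^{\kappa}(k+1)$ and then assert that \eqref{rac} ``transfers from $c^{\kappa}(\cdot)$ to $\tilde c^{\kappa}(\cdot)$ after a trivial index shift.'' That is false in general: if $c^{\kappa}(k)\equiv 0.6$ then \eqref{rac} holds, yet $\tilde c^{\kappa}(k)\equiv 1.2$ and the product series diverges. The sharp matrix constant your own computation produces is $d^{\kappa}(k)=c^{\kappa}(k,1)+c^{\kappa}(k,2)+c^{\kappa}(k+1,3)$, and what Theorem \ref{zajeb-mlo1} actually requires is \eqref{rac} for $d^{\kappa}$. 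The theorem's stated hypothesis --- \eqref{rac} with $c^{\kappa}(k)=c^{\kappa}(k,1)+c^{\kappa}(k,2)+c^{\kappa}(k,3)$ --- neither implies nor is implied by this. This appears to be an index slip in the paper's formulation (the third summand should be $c^{\kappa}(k+1,3)$, matching the $A_{2}(k+1)$ entry of $\mathbf{B}(k)$); with that correction your argument works directly with $d^{\kappa}$ in place of $\tilde c^{\kappa}$, and no doubling or ``transfer'' is needed.
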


Without going into full details, we will only note that Theorem \ref{zajeb-mlo1ho} can be successfully applied in the analysis of the existence and uniqueness of almost periodic solutions to 
the following abstract nonautonomous semi-discrete Poisson wave equation in the space $C_{0}(\Omega)$: 
\begin{align*} m_{2}(k+2,x)& u(k+2,x)+
m_{1}(k+1,x)u(k+1,x)
\\& =\Delta_{0} u(k,x) -b(k)u(k,x)+f(k,x),\ k\in {\mathbb Z},\ x\in {\Omega};\\&
u(k,x)=0,\quad (k,x)\in {\mathbb Z} \times \partial \Omega ,
\end{align*}
where the sequences $m_{1,2}: {\mathbb Z}\rightarrow C_{0}(\Omega)$ are almost periodic, the sequence $f : {\mathbb Z}\rightarrow C_{0}(\Omega)$ is almost periodic, the sequence $b : {\mathbb Z}\rightarrow {\mathbb C}$ is almost periodic, the values of terms $\sup_{k\in {\mathbb Z};x\in \overline{\Omega}}|m_{1,2}(k,x)|$ are sufficiently small and there exists $c>0$ such that $\Re (b(k))>c$ for all $k\in {\mathbb Z}.$  The consideration is quite similar to the consideration carried out in Example \ref{dl} and therefore omitted.

\end{document}